\newtheorem{thm}{Theorem}[section]
\newtheorem{pro}[thm]{Proposition}
\theoremstyle{definition}
\newtheorem{defi}[thm]{Definition}
\newtheorem{rmk}[thm]{Remark}
\newcommand {\emptycomment}[1]{}
\newcommand{\tr}[1]{\textcolor{blue}{#1}}
\newcommand{\TR}[1]{\textcolor{purple}{#1}}
\newcommand{\postLie}{\mathsf{PL}}
\newcommand{\nc}{\newcommand}
\nc{\delete}[1]{{}}
\nc{\CV}{\mathbf{C}}
\nc{\oprn}{\theta}
\nc{\Oprn}{\Theta}
\newcommand{\lon }{\,\rightarrow\,}
\newcommand{\be }{\begin{equation}}
	\newcommand{\ee }{\end{equation}}
\nc{\frakg}{\mathfrak{g}}
\newcommand{\g}{\mathfrak g}
\newcommand{\h}{\mathfrak h}
\newcommand{\Labn}{\mathfrak{n}}
	\newcommand{\Labe}{\mathfrak{e}}
\nc{\adj}{\xspace}
\nc{\opt}{operator\xspace}
\nc{\Opt}{Operator\xspace}
\nc{\calo}{\mathcal{O}}
\newcommand{\huaH}{\mathcal{H}}
\newcommand{\PLie}{\mathsf{pre}}
\newcommand{\s}{\mathfrak{s}}
\newcommand{\frkC}{\mathfrak C}
\newcommand{\half}{\frac{1}{2}}
\newcommand{\Id}{{\rm{Id}}}
\newcommand{\br}[1]{   [ \cdot,    \cdot  ]   }
\newcommand{\Hom}{\mathrm{Hom}}
\newcommand{\Der}{\mathsf{Der}}
\tikzset{
	cross/.style={path picture={
			\draw[symbols]
			(path picture bounding box.south east) -- (path picture bounding box.north west) (path picture bounding box.south west) -- (path picture bounding box.north east);
	}},
	root/.style={circle,fill=green!50!black,inner sep=0pt, minimum size=1.2mm},
	dot/.style={circle,fill=pageforeground,inner sep=0pt, minimum size=1mm},
	dotred/.style={circle,fill=pageforeground!50!pagebackground,inner sep=0pt, minimum size=2mm},
	var/.style={circle,fill=pageforeground!10!pagebackground,draw=pageforeground,inner sep=0pt, minimum size=3mm},
	kernel/.style={semithick,shorten >=2pt,shorten <=2pt},
	kernels/.style={snake=zigzag,shorten >=2pt,shorten <=2pt,segment amplitude=1pt,segment length=4pt,line before snake=2pt,line after snake=5pt,},
	rho/.style={densely dashed,semithick,shorten >=2pt,shorten <=2pt},
	testfcn/.style={dotted,semithick,shorten >=2pt,shorten <=2pt},
	renorm/.style={shape=circle,fill=pagebackground,inner sep=1pt},
	labl/.style={shape=rectangle,fill=pagebackground,inner sep=1pt},
	xic/.style={very thin,circle,draw=symbols,fill=symbols,inner sep=0pt,minimum size=1.2mm},
	g/.style={very thin,rectangle,draw=symbols,fill=symbols!10!pagebackground,inner sep=0pt,minimum width=2.5mm,minimum height=1.2mm},
	xi/.style={very thin,circle,draw=symbols,fill=symbols!10!pagebackground,inner sep=0pt,minimum size=1.2mm},
	xies/.style={very thin,rectangle,fill=green!50!black!25,draw=symbols,inner sep=0pt,minimum size=1.1mm},
	xiesf/.style={very thin,rectangle,fill=green!50!black,draw=symbols,inner sep=0pt,minimum size=1.1mm},
	xix/.style={very thin,crosscircle,fill=symbols!10!pagebackground,draw=symbols,inner sep=0pt,minimum size=1.2mm},
	X/.style={very thin,cross,rectangle,fill=pagebackground,draw=symbols,inner sep=0pt,minimum size=1.2mm},
	xib/.style={thin,circle,fill=symbols!10!pagebackground,draw=symbols,inner sep=0pt,minimum size=1.6mm},
	xie/.style={thin,circle,fill=green!50!black,draw=symbols,inner sep=0pt,minimum size=1.6mm},
	xid/.style={thin,circle,fill=symbols,draw=symbols,inner sep=0pt,minimum size=1.6mm},
	xibx/.style={thin,crosscircle,fill=symbols!10!pagebackground,draw=symbols,inner sep=0pt,minimum size=1.6mm},
	kernels2/.style={very thick,draw=connection,segment length=12pt},
	keps/.style={thin,draw=symbols,->},
	kepspr/.style={thick,draw=connection,->},
	krho/.style={thin,draw=symbols,superdense,->},
	krhopr/.style={thick,draw=connection,superdense},
	triangle/.style = { regular polygon, regular polygon sides=3},
	not/.style={thin,circle,draw=connection,fill=connection,inner sep=0pt,minimum size=0.5mm},
	diff/.style = {very thin,draw=symbols,triangle,fill=red!50!black,inner sep=0pt,minimum size=1.6mm},
	diff1/.style = {very thin,dectriangle={1}{0},fill=red!50!black,draw=symbols,inner sep=0pt,minimum size=1.6mm},
	diff2/.style = {very thin,dectriangle={1}{1},fill=red!50!black,draw=symbols,inner sep=0pt,minimum size=1.6mm},
	diffmini/.style = {very thin,rectangle,fill=black,draw=black,inner sep=0pt,minimum size=0.75mm},
	kernelsmod/.style={very thick,draw=connection,segment length=12pt},
	rec/.style = {very thin,rectangle,fill=black,draw=black,inner sep=0pt,minimum size=2mm},
	cerc/.style={very thin,circle,draw=black,fill=symbols,inner sep=0pt,minimum size=2mm},
	stars/.style={very thin,star,star points=6,star point ratio=0.5, draw=black,fill=red,inner sep=0pt,minimum size=0.7mm},
	>=stealth,
}
\tikzset{
	root/.style={circle,fill=black!50,inner sep=0pt, minimum size=3mm},
	circ/.style={circle,fill=white,draw=black,very thin,inner sep=.5pt, minimum size=1.2mm},
	round1/.style={fill=white,outer sep = 0,inner sep=2pt,rounded corners=1mm,draw,text=black,thin,minimum size=1.2mm},
	circ1/.style={circle,fill=red!10,draw=red,very thin,inner sep=.5pt, minimum size=1.2mm},
	rect/.style={fill=white,outer sep = 0,inner sep=2pt,rectangle,draw,text=black,thin,minimum size=1.2mm},
	rect1/.style={fill=white,outer sep = 0,inner sep=2pt,rectangle,draw,text=black,thin,minimum size=1.2mm},
	round2/.style={fill=red!10,outer sep = 0,inner sep=2pt,rounded corners=1mm,draw,text=black,thin,minimum size=1.2mm},
	round3/.style={fill=blue!10,outer sep = 0,inner sep=2pt,rounded corners=1mm,draw,text=black,thin,minimum size=1.2mm},
	rect2/.style={fill=black!10,outer sep = 0,inner sep=2pt,rectangle,draw,text=black,thin,minimum size=1.2mm},
	dot/.style={circle,fill=black,inner sep=0pt, minimum size=1.2mm},
	dotred/.style={circle,fill=black!50,inner sep=0pt, minimum size=2mm},
	var/.style={circle,fill=black!10,draw=black,inner sep=0pt, minimum size=3mm},
	kernel/.style={semithick,shorten >=2pt,shorten <=2pt},
	diag/.style={thin,shorten >=4pt,shorten <=4pt},
	kernel1/.style={thick},
	kernels/.style={snake=zigzag,shorten >=2pt,shorten <=2pt,segment amplitude=1pt,segment length=4pt,line before snake=2pt,line after snake=5pt,},
	kernels1/.style={snake=zigzag,segment amplitude=0.5pt,segment length=2pt},
	rho1/.style={densely dotted,semithick},
	rho/.style={densely dashed,semithick,shorten >=2pt,shorten <=2pt},
	testfcn/.style={dotted,semithick,shorten >=2pt,shorten <=2pt},
	visible/.style={draw, circle, fill, inner sep=0.25ex},
	renorm/.style={shape=circle,fill=white,inner sep=1pt},
	labl/.style={shape=rectangle,fill=white,inner sep=1pt},
	xic/.style={very thin,circle,fill=symbols,draw=black,inner sep=0pt,minimum size=1.2mm},
	xi/.style={very thin,circle,fill=blue!10,draw=black,inner sep=0pt,minimum size=1.2mm},
	xib/.style={very thin,circle,fill=blue!10,draw=black,inner sep=0pt,minimum size=1.6mm},
	xie/.style={very thin,circle,fill=green!50!black,draw=black,inner sep=0pt,minimum size=1mm},
	xid/.style={very thin,circle,fill=symbols,draw=black,inner sep=0pt,minimum size=1.6mm},
	edgetype/.style={very thin,circle,draw=black,inner sep=0pt,minimum size=5mm},
	nodetype/.style={very thick,circle,draw=black,inner sep=0pt,minimum size=5mm},
	kernels2/.style={very thick,draw=connection,segment length=12pt},
	clean/.style={thin,circle,fill=black,inner sep=0pt,minimum size=1mm},	not/.style={thin,circle,fill=symbols,draw=connection,fill=connection,inner sep=0pt,minimum size=0.8mm},
	>=stealth,
}
\begin{document}
	
	\title[Post-Lie deformations of pre-Lie algebras]{Post-Lie deformations of pre-Lie algebras  and their applications in Regularity Structures}

	\author{Yvain Bruned}
	\address{Universit\'e de Lorraine, CNRS, IECL, F-54000 Nancy, France}
	\email{yvain.bruned@univ-lorraine.fr}
	
	\author{Yunhe Sheng}
	\address{Department of Mathematics, Jilin University, Changchun 130012, Jilin, China}
	\email{shengyh@jlu.edu.cn}
	
	\author{Rong Tang}
	\address{Department of Mathematics, Jilin University, Changchun 130012, Jilin, China}
	\email{tangrong@jlu.edu.cn}
	
	
	
	\begin{abstract}
In this paper, we study post-Lie deformations of a pre-Lie algebra, namely deforming a pre-Lie algebra into a post-Lie algebra. We construct the differential graded Lie algebra that governs post-Lie deformations of a pre-Lie algebra. We also develop the post-Lie cohomology theory for a pre-Lie algebra, by which we classify infinitesimal post-Lie deformations of a pre-Lie algebra using the second cohomology group. The rigidity of such kind of deformations is also characterized using the second cohomology group. Finally, we apply this deformation theory to Regularity Structures. We prove that  the post-Lie algebraic structure on the decorated trees which appears spontaneously in  Regularity Structures is a post-Lie deformation of a pre-Lie algebra.
		
	\end{abstract}
	
\renewcommand{\thefootnote}{}
\footnotetext{2020 Mathematics Subject Classification.
17B38,
 17B56,
 60L30 
}
	
	\keywords{post-Lie algebra, deformation, cohomology, pre-Lie algebra, Regularity Structure}
	
	\maketitle
	\tableofcontents

	\allowdisplaybreaks
	

\section{Introduction}

The notion of a post-Lie algebra has been introduced by Vallette in the course of study of Koszul duality of operads \cite{Val}.
Munthe-Kaas and his coauthors found that post-Lie algebras also naturally appear in differential geometry \cite{GMS} and numerical integration on  manifolds \cite{Munthe-Kaas-Lundervold}.  Meanwhile, it was found that post-Lie algebras play an essential role  in Regularity Structures in stochastic analysis \cite{BHZ,BK,Hairer}.
Recently, post-Lie algebras have been studied from different points of view including constructions of nonabelian generalized Lax pairs \cite{BGN},    Poincar\'e-Birkhoff-Witt type theorems \cite{D,Gubarev}, factorization theorems \cite{EMM}, multiple zeta values \cite{Burmester-1,Burmester-2} and relations to post-Lie groups \cite{AFM,BGST,MQS}. 

A pre-Lie algebra is a post-Lie algebra whose underlying Lie algebra is abelian. Pre-Lie
algebras are a class of nonassociative algebras coming from the study of convex homogeneous cones, affine manifolds and affine
structures on Lie groups, and  cohomologies of associative algebras.  They also appeared in many fields in
mathematics and mathematical physics, such as complex and symplectic structures on Lie groups and Lie algebras, integrable
systems,  vertex algebras and operads. See \cite{Bai,Bu,Ma,Sm22b} for more details. The homotopy theory of pre-Lie algebras was given in \cite{CL} using the operadic approach. Furthermore, the cohomology theory of pre-Lie algebras was given by Dzhumadil'daev and Boyom in \cite{DA} and \cite{Boyom}.

The method of deformation is ubiquitous in mathematics and physics.
Roughly speaking, a deformation of a mathematical structure is a perturbation which gives the same  kind of structure. Motivated by the foundational  work of Kodaira and Spencer~\cite{KS} for complex analytic structures, deformation theory finds its generalization in algebraic geometry~\cite{Ha} and further in number theory as deformations of Galois representations~\cite{Maz}. The deformation of algebraic structures began with the seminal
work of Gerstenhaber~\cite{Ge0,Ge} for associative
algebras and followed by its extension to Lie algebras by
Nijenhuis and Richardson~\cite{NR}. Deformations of other
algebraic structures such as pre-Lie algebras have also been
developed~\cite{Bu0}. In general, deformation theory was developed
for binary quadratic operads by Balavoine~\cite{Bal}.

In this paper, we consider the problem of deforming pre-Lie algebras into post-Lie algebras. On the one hand, we construct the differential graded Lie algebra that governs post-Lie deformations of a pre-Lie algebra. On the other hand, we develop the post-Lie cohomology theory for a pre-Lie algebra, such that the second cohomology group classifies infinitesimal post-Lie deformations of a pre-Lie algebra. We also show that if the second post-Lie cohomology group of a pre-Lie algebra is trivial, then the pre-Lie algebra is post-Lie rigid.

Then we apply the established deformation theory to Regularity Structures.  In Hairer's seminal work \cite{Hairer} in stochastic partial differential equations, he introduced the notion of  Regularity Structures which  provides an algebraic framework  to describe the local Taylor expansion of functions around each point. One can think that various kinds of the theory of rough paths and  the usual Taylor expansions have been  unified into Regularity Structures. It is noteworthy that decorated rooted trees and related combinatorial Hopf algebras play pivotal role in the algebraic renormalisation of Regularity Structures \cite{BHZ}. Recently, the natural post-Lie algebraic structures on the combining objects  as decorated rooted trees, multi-indices and noncommutative derivations have been discovered and simplified the combinatorial Hopf algebraic structures in Regularity Structures \cite{BK,JZ}. We prove that  the post-Lie algebraic structure on the decorated trees is a post-Lie deformation of a pre-Lie algebra. Moreover, take advantage of the Taylor deformation of the post-Lie algebraic structure on the decorated trees, we construct a concrete formal post-Lie deformation of the pre-Lie algebra on the decorated trees.

The paper is organized as follows. In Section \ref{sec:con}, we give the differential graded Lie algebra that governs post-Lie deformations of a pre-Lie algebra. In Section \ref{sec:coh}, we introduce the post-Lie cohomology for a pre-Lie algebra, by which we study formal post-Lie deformations of a pre-Lie algebra. In Section \ref{sec:app}, we apply the deformation theory described above to the post-Lie algebra appearing in the context of Regularity Structures.

	\section{Post-Lie deformations of  pre-Lie algebras}\label{sec:con}
In this section, we study post-Lie deformations of a pre-Lie algebra, and construct the differential graded Lie algebra that governs this kind of deformations.

	\begin{defi} (\cite{Val})\label{post-lie-defi}
		A {\bf post-Lie algebra} $(\g,[\cdot,\cdot],\rhd)$ consists of a Lie algebra $(\g,[\cdot,\cdot])$ and a binary product $\rhd:\g\otimes\g\lon\g$ such that
		\begin{eqnarray}
			\label{Post-1}x\rhd[y,z]&=&[x\rhd y,z]+[y,x\rhd z],\\
			\label{Post-2}([x,y]+x\rhd y-y\rhd x) \rhd z&=&x\rhd(y\rhd z)-y\rhd(x\rhd z),\,\,\forall x, y, z\in \g.
		\end{eqnarray}
	\end{defi}

		Any post-Lie algebra $(\g,[\cdot,\cdot],\rhd)$ gives rise to a new Lie algebra
		$\g_\rhd:=(\g,[\cdot,\cdot]_{\g_\rhd})$
		defined by
		$$[x,y]_{\g_\rhd} \coloneqq x\rhd y-y\rhd x+[x,y],\quad\forall x,y\in\g,$$
		which is called the {\bf sub-adjacent Lie algebra}. Moreover,
		Eqs.~\eqref{Post-1}-\eqref{Post-2} equivalently mean that the linear map $L:\g\to\mathfrak g\mathfrak l(\g)$ defined by $L_{x}y=x\rhd y$ is an action of the Lie algebra $(\g,[\cdot,\cdot]_{\g_\rhd})$ on the Lie algebra $(\g,[\cdot,\cdot])$.
	
	\begin{rmk}
		Let $(\g,[\cdot,\cdot],\rhd)$ be a post-Lie algebra. If the Lie bracket $[\cdot,\cdot]$ vanishes, then $(\g,\rhd)$ becomes a pre-Lie algebra. For further details on
pre-Lie algebras, see \cite{Bai,Bu,Ma}. Thus,  a post-Lie algebra can be viewed as a nonabelian generalization of a pre-Lie algebra.
	\end{rmk}

\begin{defi}\label{post-lie-homo-defi}
		\begin{itemize}
			\item  A {\bf derivation} of a post-Lie algebra $(\g,[\cdot,\cdot],\rhd)$ is a linear map $d:\g\lon\g$  satisfying $
				d([x,y])=[d(x),y]+[x,d(y)]$ and $
				d(x\rhd y)=d(x)\rhd y+x\rhd d(y)$ for all $x,y\in\g.$
			Denote by $\Der(\g)$ the set of derivations of the post-Lie algebra $(\g,[\cdot,\cdot],\rhd)$.
			
			\item   A {\bf homomorphism} from a post-Lie algebra $(\g,[\cdot,\cdot],\rhd)$ to a post-Lie algebra $(\h,[\cdot,\cdot],\rhd)$ is a Lie algebra morphism $\phi:\g\lon \h$ such that
			$
				\phi(x\rhd y)=\phi(x)\rhd \phi(y)$ for all $x,y\in\g.
		$
			In particular, if $\phi$ is  invertible,  then $\phi$ is called an  {\bf isomorphism}.
		\end{itemize}
		
	\end{defi}

	 We consider the following question:

\begin{center}
{\bf  Deforming a pre-Lie algebra into a post-Lie algebra. }
\end{center}

 Consider the post-Lie deformation  of a pre-Lie algebra, we have the following characterization.

\begin{thm}\label{formula-post-deform}
Let $(\g,\rhd)$ be a pre-Lie algebra. Let $\pi\in\Hom(\wedge^2\g,\g)$ and $\omega\in\Hom(\g\otimes\g,\g)$ be linear maps. Then $(\g,\pi,\rhd+\omega)$ is a post-Lie algebra if and only if
\begin{itemize}
  \item[\rm{(i)}] $(\g,\pi)$ is a Lie algebra;

  \item[\rm{(ii)}] 	$x\rhd\pi(y,z)+\omega(x,\pi(y,z))=\pi(x\rhd y,z)+\pi(y,x\rhd z)+\pi(\omega(x,y),z)+\pi(y,\omega(x,z));
	$

  \item[\rm{(iii)}] 	
  $
		  \big(\omega(x,y)-\omega(y,x)+\pi(x,y)\big)\rhd z+\omega\big(x\rhd y-y\rhd x,z\big)+\omega(\omega(x,y)-\omega(y,x)+\pi(x,y), z)\\
		=\omega(x,y\rhd z)+x\rhd \omega(y,z)-\omega(y,x\rhd z)-y\rhd \omega_1(x,z)+\omega(x,\omega(y, z))-\omega(y,\omega(x, z)).
$
\end{itemize}
In this case,  we say that $(\g,\pi,\rhd+\omega)$
is a {\bf  post-Lie deformation} of the pre-Lie algebra $(\g,\rhd)$.
\end{thm}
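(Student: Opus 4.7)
My plan is to reduce the statement to a direct unpacking of the two post-Lie axioms \eqref{Post-1}--\eqref{Post-2} with the substitutions $[\cdot,\cdot]=\pi$ and $\rhd \leadsto \rhd+\omega$, using the pre-Lie identity for $\rhd$ to cancel the ``undeformed'' part.

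\textbf{Step 1 (Lie algebra condition).} By Definition~\ref{post-lie-defi}, $(\g,\pi,\rhd+\omega)$ being a post-Lie algebra requires first that $(\g,\pi)$ is a Lie algebra, and this is precisely condition (i). Nothing to prove here.

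\textbf{Step 2 (derivation axiom \eqref{Post-1}).} The condition $x\star\pi(y,z)=\pi(x\star y,z)+\pi(y,x\star z)$ with $\star:=\rhd+\omega$ expands bilinearly to
\[
x\rhd\pi(y,z)+\omega(x,\pi(y,z))=\pi(x\rhd y+\omega(x,y),z)+\pi(y,x\rhd z+\omega(x,z)),
\]
which, after using bilinearity of $\pi$, is exactly condition (ii). So (ii) is equivalent to \eqref{Post-1}.

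\textbf{Step 3 (Jacobi-type axiom \eqref{Post-2}).} This is the main calculation. I substitute $\star=\rhd+\omega$ and $[\cdot,\cdot]=\pi$ on both sides of \eqref{Post-2} and expand using bilinearity. The left-hand side becomes
\[
\bigl(\pi(x,y)+x\rhd y-y\rhd x+\omega(x,y)-\omega(y,x)\bigr)\rhd z
+\omega\bigl(\pi(x,y)+x\rhd y-y\rhd x+\omega(x,y)-\omega(y,x),\,z\bigr),
\]
while the right-hand side becomes
\[
x\rhd(y\rhd z)-y\rhd(x\rhd z)+x\rhd\omega(y,z)-y\rhd\omega(x,z)+\omega(x,y\rhd z)-\omega(y,x\rhd z)+\omega(x,\omega(y,z))-\omega(y,\omega(x,z)).
\]
Now the key cancellation: the pre-Lie identity for $\rhd$, namely $(x\rhd y-y\rhd x)\rhd z=x\rhd(y\rhd z)-y\rhd(x\rhd z)$, removes the two terms $(x\rhd y-y\rhd x)\rhd z$ on the left and the two terms $x\rhd(y\rhd z)-y\rhd(x\rhd z)$ on the right. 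Splitting the remaining $\omega(\,\cdot\,,z)$ term on the left according to the two groupings $x\rhd y-y\rhd x$ and $\omega(x,y)-\omega(y,x)+\pi(x,y)$ and comparing with the right-hand side yields exactly condition (iii).

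\textbf{Conclusion and obstacle.} Combining the three steps, \eqref{Post-1}--\eqref{Post-2} together with the Jacobi identity for $\pi$ are equivalent to (i)--(iii). There is no conceptual obstacle; the whole proof is an expansion. The only care required is bookkeeping in Step 3, where eight cross-terms must be tracked and where one must correctly identify the cancellation provided by the pre-Lie identity for $\rhd$; the statement in (iii) (reading the clear typo $\omega_1$ as $\omega$) is arranged precisely so that this cancellation is manifest.
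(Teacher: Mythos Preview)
Your proof is correct and is precisely the direct expansion the theorem invites; the paper itself does not include a written proof of Theorem~\ref{formula-post-deform}, treating it as an immediate unpacking of Definition~\ref{post-lie-defi} (and later recasting conditions (i)--(iii) as the Maurer--Cartan equation in Theorem~\ref{def-post-lie}). Your Step~3 cancellation via the pre-Lie identity is exactly the point, and your reading of $\omega_1$ as $\omega$ is correct.
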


	In the sequel, we give an intrinsic explanation of the above compatibility conditions, and give the differential graded Lie algebra that governs post-Lie deformations of a pre-Lie algebra. First we recall the controlling algebra of post-Lie algebras. We will write $\mathbb S_n$ for the group of permutations on $n$ symbols. A permutation $\sigma\in\mathbb S_n$ is called an $(i,n-i)$-shuffle if $\sigma(1)<\ldots <\sigma(i)$ and $\sigma(i+1)<\ldots <\sigma(n)$. If $i=0$ or $n$, we assume $\sigma=\Id$. The set of all $(i,n-i)$-shuffles will be denoted by $\mathbb S_{(i,n-i)}$. The notion of an $(i_1,\ldots,i_k)$-shuffle and the set $\mathbb S_{(i_1,\ldots,i_k)}$ are defined analogously.

	Let $V$ be a vector space. Consider the graded vector space
	$
	C_{\postLie}^*(V,V)=\oplus_{n=0}^{+\infty}C_{\postLie}^{n}(V,V),
	$
	 given by the formula
	$$
	C_{\postLie}^{n}(V,V)=\oplus_{i=0}^{n}\Hom(\wedge^{i}V\otimes \wedge^{n+1-i}V,V).
	$$
	We will write $f=(f_0,\ldots,f_n)\in C_{\postLie}^{n}(V,V)$, where $f_i\in \Hom(\wedge^iV\otimes\wedge^{n+1-i}V,V)$. For $f\in C_{\postLie}^{n}(V,V)$ and $g\in C_{\postLie}^{m}(V,V)$, define  $f\circ g=\big((f\circ g)_0,\ldots,(f\circ g)_{n+m}\big)\in C^{n+m}_\postLie(V,V)$ for $0\le k\le m$ by
	\begin{eqnarray*}
		&&(f\circ g)_k(x_1 \ldots x_{k}\otimes x_{k+1} \ldots x_{n+m+1})\\
		&=&\sum_{\sigma\in\mathbb S_{(k-j,j)}\atop 0\le j\le k}\sum_{ \tau\in\mathbb S_{(m+1-j,n+j-k)}}(-1)^\sigma(-1)^\tau(-1)^{m(k-j)}\\
		\nonumber&&f_{k-j}\big(x_{\sigma(1)} \ldots  x_{\sigma(k-j)}\otimes  g_{j}(x_{\sigma(k-j+1)} \ldots x_{\sigma(k)}\otimes x_{k+\tau(1)} \ldots x_{k+\tau(m+1-j)}) x_{k+\tau(m+2-j)} \ldots  x_{k+\tau(n+m+1-k)}\big),
	\end{eqnarray*}
	and for $m+1\le k\le m+n$ by
	\begin{eqnarray*}
		\nonumber&&(f\circ g)_{k}(x_1 \ldots  x_k\otimes x_{k+1} \ldots  x_{n+m+1})\\
		&=&\sum_{\sigma\in \mathbb S_{(j,m+1-j,k-m-1)}\atop 0\le j\le m}(-1)^{\sigma}f_{k-m}\big(g _{j}(x_{\sigma(1)} \ldots x_{\sigma(j)}\otimes x_{\sigma(j+1)} \ldots x_{\sigma(m+1)}) x_{\sigma(m+2)} \ldots  x_{\sigma(k)}\otimes  x_{k+1} \ldots x_{n+m+1}\big)\\
		\nonumber&&+\sum_{\sigma\in\mathbb S_{(k-j,j)}\atop 0\le j\le m}\sum_{ \tau\in\mathbb S_{(m+1-j,n+j-k)}}(-1)^\sigma(-1)^\tau(-1)^{m(k-j)}\\
		\nonumber&&f_{k-j}\big(x_{\sigma(1)} \ldots  x_{\sigma(k-j)}\otimes  g_{j}(x_{\sigma(k-j+1)} \ldots x_{\sigma(k)}\otimes x_{k+\tau(1)} \ldots x_{k+\tau(m+1-j)}) x_{k+\tau(m+2-j)} \ldots  x_{k+\tau(n+m+1-k)}\big).
	\end{eqnarray*}

	\begin{thm}\cite{LST24}\label{post-lie-gla}
		Let $V$ be a vector space. Then  $(C_{\postLie}^*(V,V),[\cdot,\cdot]_{\postLie}) $ is a graded Lie algebra, where the graded Lie bracket $[\cdot,\cdot]_{\postLie}$ is given by
		\begin{equation}
			[f,g]_{\postLie}:=f\circ g-(-1)^{nm}g\circ f,
			\quad \forall f\in C_{\postLie}^{n}(V,V),~g\in C_{\postLie}^{m}(V,V).
			\label{eq:glapL}
		\end{equation}
		Moreover, its Maurer-Cartan elements are precisely post-Lie algebra structures on the vector space  $V$.
	\end{thm}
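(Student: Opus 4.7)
The plan is to establish the theorem in three stages: well-definedness and graded antisymmetry of $[\cdot,\cdot]_{\postLie}$, the graded Jacobi identity, and the Maurer-Cartan characterization. Throughout, the multi-component structure $f=(f_0,\ldots,f_n)$ plays the role of an arity/type decomposition, with $f_i$ treating $i$ of its arguments as "Lie" inputs (antisymmetrized on the left of the $\otimes$) and $n+1-i$ as "magmatic" inputs (antisymmetrized on the right).

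First I would check that $\circ$ is well-defined, i.e.\ that each component $(f\circ g)_k$ really lies in $\Hom(\wedge^kV\otimes\wedge^{n+m+1-k}V,V)$. This reduces to verifying that the shuffle sums absorb transpositions of the $x_j$'s within each block, a routine bookkeeping using the fact that every $(i,j)$-shuffle lifts to an $(i,j,\ldots)$-shuffle upon refinement. The graded antisymmetry $[f,g]_{\postLie}=-(-1)^{nm}[g,f]_{\postLie}$ is then built into the definition~\eqref{eq:glapL}.

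The heart of the proof is the graded Jacobi identity, which I would obtain from the stronger graded right-symmetric (pre-Lie) relation
\begin{equation*}
(f\circ g)\circ h - f\circ(g\circ h) \;=\; (-1)^{mp}\Bigl((f\circ h)\circ g - f\circ(h\circ g)\Bigr),
\end{equation*}
for $f,g,h$ of degrees $n,m,p$. Given this identity, the graded Jacobi identity for $[\cdot,\cdot]_{\postLie}$ is purely formal, exactly as in Gerstenhaber's original argument for the Hochschild complex. Verifying the right-symmetric relation is the main obstacle: both sides expand into nested shuffle sums, and one must pair the terms where $g$ and $h$ are "nested" inside $f$ (which cancel across the two sides of the equation) versus terms where $g$ and $h$ act on disjoint argument blocks of $f$ (which appear symmetrically). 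The cleanest way to organize the bookkeeping is to identify $C^*_{\postLie}(V,V)$ with the space of coderivations of the cofree conilpotent coalgebra $\Sym^c(V)\otimes\Lambda^c(V)$ governed by the Koszul dual cooperad of $\postLie$, under which $\circ$ becomes the standard pre-Lie product of coderivations; the Koszul duality then furnishes the identity automatically. Alternatively, one can grind through the shuffle combinatorics directly, using the decomposition of $\mathbb S_{(i_1,\ldots,i_k)}$ to separate "internal" and "external" compositions.

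Finally, for the Maurer-Cartan identification, write a degree $1$ element as $\mu=(\mu_0,\mu_1)$ with $\mu_0\in\Hom(\wedge^2V,V)$ and $\mu_1\in\Hom(V\otimes V,V)$. Since $[\mu,\mu]_{\postLie}=2\,\mu\circ\mu$, the Maurer-Cartan equation decomposes component-wise in $C^{2}_{\postLie}(V,V)=\Hom(\wedge^3V,V)\oplus\Hom(V\otimes\wedge^2V,V)\oplus\Hom(\wedge^2V\otimes V,V)$. Expanding the formula for $\mu\circ\mu$, the component $(\mu\circ\mu)_0$ is the Jacobiator of $\mu_0$, vanishing iff $(V,\mu_0)$ is a Lie algebra; the component $(\mu\circ\mu)_1$ is precisely the failure of $\mu_1(x,-)$ to be a derivation of $\mu_0$, vanishing iff \eqref{Post-1} holds with $[\cdot,\cdot]=\mu_0$, $\rhd=\mu_1$; and $(\mu\circ\mu)_2$ is the failure of left multiplication to descend to the subadjacent bracket, vanishing iff \eqref{Post-2} holds. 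Thus the Maurer-Cartan elements of degree $1$ are exactly the post-Lie structures on $V$ in the sense of Definition~\ref{post-lie-defi}.
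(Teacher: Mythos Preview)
The paper does not prove this theorem: it is quoted verbatim from \cite{LST24} and used as a black box, so there is no ``paper's own proof'' to compare against. Your outline is a reasonable sketch of how such a result is established in the operadic deformation-theory literature, and the overall architecture (well-definedness, graded pre-Lie identity $\Rightarrow$ Jacobi, then unpacking the Maurer-Cartan equation component-wise) is the standard one.

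A couple of technical remarks on your sketch. First, the coalgebra you write as $\Sym^c(V)\otimes\Lambda^c(V)$ is not quite the right object: the Koszul dual of $\postLie$ is $\huaComTrias$, and the underlying functor of the cofree conilpotent $\huaComTrias^{\text{co}}$-coalgebra on $V$ is what produces the pieces $\wedge^iV\otimes\wedge^{n+1-i}V$ (two exterior factors, not one symmetric and one exterior). This does not affect the strategy---identifying $C^*_{\postLie}(V,V)$ with coderivations of that cofree coalgebra is exactly how \cite{LST24} obtains the graded Lie structure---but the description should be corrected. Second, in the Maurer-Cartan step your identifications are correct: with $\mu=(\mu_0,\mu_1)\in\Hom(\wedge^2V,V)\oplus\Hom(V\otimes V,V)$, the three components of $\mu\circ\mu$ in $\Hom(\wedge^3V,V)$, $\Hom(V\otimes\wedge^2V,V)$, $\Hom(\wedge^2V\otimes V,V)$ are precisely the Jacobiator of $\mu_0$, the obstruction to \eqref{Post-1}, and the obstruction to \eqref{Post-2}, respectively. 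That part of your argument is complete as stated.
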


	Let $(\g, \rhd)$ be a pre-Lie algebra.   By Theorem \ref{post-lie-gla},   $\rhd\in \Hom(\g\otimes\g,\g)$ is a Maurer-Cartan element of the graded Lie algebra   $(C_{\postLie}^*(\g,\g),[\cdot,\cdot]_{\postLie})$.
	Define the inner derivation $d_\rhd:C_{\postLie}^*(\g,\g)\lon C_{\postLie}^*(\g,\g)$ by
	\begin{eqnarray*}
		d_\rhd(f):=[\rhd,f]_{\postLie},\,\,\,\,\forall f\in C_{\postLie}^*(\g,\g).
	\end{eqnarray*}
	Since a pre-Lie algebra is a special post-Lie algebra, so we have  $[\rhd,\rhd]_{\postLie}=0$ and we obtain a
	differential graded Lie algebra $(C_{\postLie}^*(\g,\g),[\cdot,\cdot]_{\postLie},d_\rhd)$. This differential graded Lie algebra governs post-Lie deformations of the pre-Lie algebra $(\g,\rhd)$.

	\begin{thm}\label{def-post-lie}
		Let $(\g,\rhd)$ be a pre-Lie algebra. Then   $ (\pi,\omega)$ is  a  post-Lie deformation of the pre-Lie algebra $(\g,\rhd)$, where $\pi:\g\wedge\g\longrightarrow \g$ and $\omega:\g\otimes\g\longrightarrow \g$ are linear maps,   if and only if $\Pi=(\pi,\omega)$ is a Maurer-Cartan
		element of the differential graded Lie algebra
		$(C_{\postLie}^*(\g,\g),[\cdot,\cdot]_{\postLie},d_\rhd)$.
	\end{thm}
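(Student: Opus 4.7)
The plan is to leverage Theorem~\ref{post-lie-gla}, which identifies post-Lie structures on a vector space with Maurer-Cartan elements of the graded Lie algebra $(C_{\postLie}^*(\g,\g),[\cdot,\cdot]_{\postLie})$. First, since a pre-Lie algebra is exactly a post-Lie algebra with vanishing Lie bracket, the pre-Lie product $\rhd$, viewed as an element of $C_{\postLie}^1(\g,\g)$ with zero in the Lie-bracket component, is already a Maurer-Cartan element. Consequently $[\rhd,\rhd]_{\postLie}=0$, which is what renders $d_\rhd=[\rhd,\cdot]_{\postLie}$ a square-zero degree-one derivation and produces the DGLA $(C_{\postLie}^*(\g,\g),[\cdot,\cdot]_{\postLie},d_\rhd)$ used in the statement.

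For $\Pi=(\pi,\omega)\in C_{\postLie}^1(\g,\g)$, I would identify the candidate post-Lie structure $(\g,\pi,\rhd+\omega)$ of Theorem~\ref{formula-post-deform} with the degree-one element $\rhd+\Pi$. By Theorem~\ref{post-lie-gla}, $(\g,\pi,\rhd+\omega)$ is a post-Lie algebra if and only if
\[ [\rhd+\Pi,\,\rhd+\Pi]_{\postLie}=0. \]
Expanding by bilinearity and using graded symmetry in degree one, this becomes
\[ [\rhd,\rhd]_{\postLie}+2\,[\rhd,\Pi]_{\postLie}+[\Pi,\Pi]_{\postLie}=0. \]
Since $[\rhd,\rhd]_{\postLie}=0$ and $[\rhd,\Pi]_{\postLie}=d_\rhd(\Pi)$, the condition is equivalent to
\[ d_\rhd(\Pi)+\tfrac{1}{2}[\Pi,\Pi]_{\postLie}=0, \]
which is exactly the Maurer-Cartan equation for the DGLA $(C_{\postLie}^*(\g,\g),[\cdot,\cdot]_{\postLie},d_\rhd)$. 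Combined with Theorem~\ref{formula-post-deform}, this establishes both implications.

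The proof is conceptually short because the substantive content has been front-loaded into Theorems~\ref{formula-post-deform} and~\ref{post-lie-gla}. The only point requiring genuine care is bookkeeping: one must correctly identify the components of an element of $C_{\postLie}^1(\g,\g)$ with the pair $(\pi,\omega)$, and verify that projecting the expanded equation $2 d_\rhd(\Pi)+[\Pi,\Pi]_{\postLie}=0$ onto the respective summands of $C_{\postLie}^2(\g,\g)$ recovers precisely conditions (i), (ii) and (iii) of Theorem~\ref{formula-post-deform}. This amounts to writing out the circle-product formula for degree-one arguments and comparing term by term; it is routine but tedious, and serves mainly as a sanity check that the two characterizations agree.
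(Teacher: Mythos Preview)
Your proposal is correct and follows essentially the same approach as the paper's proof: both invoke Theorem~\ref{post-lie-gla} to identify the post-Lie structure $(\pi,\rhd+\omega)$ with the Maurer-Cartan condition $[\rhd+\Pi,\rhd+\Pi]_{\postLie}=0$, expand bilinearly, cancel $[\rhd,\rhd]_{\postLie}=0$, and recognize the remaining equation as the Maurer-Cartan equation for the twisted DGLA. Your additional remarks about the component-wise bookkeeping and the consistency check against Theorem~\ref{formula-post-deform} are accurate but not strictly needed for the proof, as the paper relegates that observation to a separate remark.
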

	\begin{proof}
	By Theorem \ref{post-lie-gla},	  $ (\pi,\omega)$ is  a  post-Lie deformation of the pre-Lie algebra $(\g,\rhd)$ if and only if
$$
[(\pi,\rhd+\omega),(\pi,\rhd+\omega)]_{\postLie}=0.
$$
By the fact $[\rhd,\rhd]_{\postLie}=0$, this is equivalent to
$$
2[\rhd,\Pi]_{\postLie}+[\Pi,\Pi]_{\postLie}=0.
$$
Therefore, we have
$$
d_\rhd\Pi+\half[\Pi,\Pi]_{\postLie}=0,
$$
which implies that $\Pi=(\pi,\omega)$ is a Maurer-Cartan element.
	\end{proof}
	
	\begin{rmk}
The condition that  $\Pi=(\pi,\omega)$ is a Maurer-Cartan
		element of the differential graded Lie algebra
		$(C_{\postLie}^*(\g,\g),[\cdot,\cdot]_{\postLie},d_\rhd)$ is equivalent to the conditions given in Theorem \ref{formula-post-deform}.
\end{rmk}
	
	\section{Post-Lie cohomologies and  formal post-Lie deformations of pre-Lie algebras}\label{sec:coh}

	In this section, we define the post-Lie cohomology of a  pre-Lie algebra. As an application, we classify infinitesimal post-Lie deformations of a pre-Lie algebra using the second cohomology group of the relevant complex.
	
	Let $(\g,\rhd)$ be a pre-Lie algebra. Define the set of  $0$-cochains $\frkC^0_{\postLie}(\g;\g)$ to be $0$, and define
	the set of  $n$-cochains $\frkC^n_{\postLie}(\g;\g)$ by
	$$
	\frkC^n_{\postLie}(\g;\g)=\oplus_{i=0}^{n-1}\Hom(\wedge^{i}\g\otimes \wedge^{n-i}\g,\g).
	$$
		Define the {\bf coboundary operator} $\partial:\frkC^n_{\postLie}(\g;\g)\lon \frkC^{n+1}_{\postLie}(\g;\g)$ by
 \begin{equation}\label{Coboundary-O}
 \partial =\sum_{k=0}^{n-2}(\partial^k_{k+1}+\partial^k_n)+\partial^{n-1}_n,
 \end{equation}
 where $\partial^k_j$, $j\in\{k+1,n\}$ is the map $\Hom(\wedge^k\g\otimes \wedge^{n-k}\g,\g)\to\Hom(\wedge^j\g\otimes \wedge^{n+1-j}\g,\g)$ given by the following formulas:
	\begin{itemize}
				\item for $k=0,1,\ldots,n-2$,
		\begin{eqnarray*}
			&&(\partial_{k+1}^{k}f_{k})(x_1,\ldots, x_{k+1}; x_{k+2}, \ldots, x_{n+1})\\
			&=&\sum_{i=1}^{k+1}(-1)^{i-1}x_i\rhd f_{k}(x_1,\ldots, \hat{x}_i,\ldots, x_{k+1}; x_{k+2}, \ldots, x_{n+1})\\
			&&+\sum_{1\le i< j\le k+1}(-1)^{i+j}f_{k}\big((x_i\rhd x_j-x_j\rhd x_i), x_1,\ldots, \hat{x}_i,\ldots, \hat{x}_j,\ldots, x_{k+1}; x_{k+2}, \ldots, x_{n+1}\big)\\
			&&-\sum_{i=1}^{k+1}\sum_{j=k+2}^{n+1}(-1)^{i+j+k+1}f_{k}(x_1, \ldots,\hat{x}_i,\ldots, x_{k+1}; x_i\rhd x_j, x_{k+2},\ldots, \hat{x}_j,\ldots, x_{n+1}),
		\end{eqnarray*}

		\item for $k=0,1,\ldots,n-2$,
				\begin{eqnarray*}
			(\partial_{n}^{k} f_k)(x_1, \ldots,  x_{n}; x_{n+1})=\sum_{\sigma\in \mathbb S_{(k,n-k)}}(-1)^{n-1}(-1)^{\sigma}f_k(x_{\sigma(1)}, \ldots, x_{\sigma(k)}; x_{\sigma(k+1)}, \ldots, x_{\sigma(n)})\rhd x_{n+1},
		\end{eqnarray*}
	\end{itemize}
	and
\begin{eqnarray*}
		&&(\partial_{n}^{n-1} f_{n-1})(x_1, \ldots,  x_{n}; x_{n+1})\\
		&=&\sum_{i=1}^{n}(-1)^{i+1}x_i\rhd f_{n-1}(x_1, \ldots,\hat{x}_i,\ldots,  x_{n}; x_{n+1})\\
		&&+\sum_{i=1}^{n}(-1)^{i+1}f_{n-1}(x_1,\ldots,\hat{x}_i,\ldots, x_{n};x_i)\rhd x_{n+1}\\
		&&+\sum_{1\le i<j\le n}(-1)^{i+j}f_{n-1}\big((x_i\rhd x_j-x_j\rhd x_i), x_{1},\ldots, \hat{x}_i,\ldots, \hat{x}_j, \ldots, x_{n}; x_{n+1}\big)\\
		&&-\sum_{i=1}^{n}(-1)^{i+1}f_{n-1}(x_{1},\ldots, \hat{x}_i,\ldots,  x_{n}; x_i\rhd x_{n+1}).
	\end{eqnarray*}
We introduce the notations $C^{k,s}=\Hom(\wedge^k\g\otimes\wedge^{s}\g,\g)$.
This is	illustrated by the following diagram:
{\footnotesize
\begin{equation*}
\xymatrix@C=7pt@R=50pt{
&&C^{0,n-1}
  \ar[dr]|{{\partial}^0_1} \ar[drrrrrrrrr]|{{\partial}^0_{n-1}} &
&C^{1,n-2}
  \ar[dr]|{{\partial}^1_2} \ar[drrrrrrr]|{{\partial}^1_{n-1}}&
&&\cdots&
&&C^{n-2,1}
  \ar[dr]| {{\partial}^{n-2}_{n-1}}&
&&\\
&C^{0,n}
  \ar[dr]|{{\partial}^0_1} \ar[drrrrrrrrrrr]|{{\partial}^0_n}&
&C^{1,n-1}
 \ar[dr]|{{\partial}^1_2} \ar[drrrrrrrrr]|{{\partial}^1_n}&
&C^{2,n-2}
  \ar[dr]|{{\partial}^2_3} \ar[drrrrrrr]|<<<<<<<<<<<<<<<<<<<<<<<<<{{\partial}^2_n}&
&\cdots&
&C^{n-2,2}
 \ar[dr]|{{\partial}^{n-2}_{n-1}} \ar[drrr]|<<<<<<<<<<<<{{\partial}^{n-2}_{n}}&
&C^{n-1,1}
 \ar[dr]|{{\partial}^{n-1}_{n}}&\\
C^{0,n+1}& & C^{1,n}& & C^{2,n-1}& & C^{3,n-2}& \cdots& C^{n-2,3}& & C^{n-1,2}& &C^{n,1}
}
\end{equation*}
}

\begin{thm}
 With the above notations,  $(\oplus _{n=0}^{+\infty}\frkC^n_{\postLie}(\g;\g),\partial)$ is a cochain complex, i.e. $\partial\circ \partial=0$.
\end{thm}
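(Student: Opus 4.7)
The plan is to deduce $\partial\circ\partial=0$ from the differential graded Lie algebra $(C_{\postLie}^*(\g,\g),[\cdot,\cdot]_{\postLie},d_\rhd)$ already constructed in Section \ref{sec:con}, rather than verifying the identity through a direct (and lengthy) combinatorial manipulation on shuffles. The first step is a comparison of the underlying graded vector spaces. Since
\begin{equation*}
C_{\postLie}^{n-1}(\g,\g)=\oplus_{i=0}^{n-1}\Hom(\wedge^i\g\otimes\wedge^{n-i}\g,\g)=\frkC^n_{\postLie}(\g;\g),
\end{equation*}
the cochain space $\frkC^n_{\postLie}(\g;\g)$ coincides, as a vector space, with the degree-$(n-1)$ piece of the graded Lie algebra of Theorem \ref{post-lie-gla}; so the new cochain complex is built on the same data as the DGLA, up to a shift by one in the grading.

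The second and main step is to check that, under this identification, the coboundary $\partial$ defined in \eqref{Coboundary-O} agrees with the inner derivation $d_\rhd(f)=[\rhd,f]_{\postLie}$, where $\rhd$ is viewed as the degree-one element $(0,\rhd)\in C_{\postLie}^1(\g,\g)$ (zero Lie bracket together with the pre-Lie product). This reduces to expanding
\begin{equation*}
[\rhd,f]_{\postLie}=\rhd\circ f-(-1)^{n-1}f\circ \rhd
\end{equation*}
using the explicit formulas for $\circ$ stated before Theorem \ref{post-lie-gla}, and sorting the resulting terms by the $(k,s)$-piece of the output. The contributions in which an input of $\rhd$ is "plugged into $f$" reproduce the operators $\partial^k_{k+1}$ together with the first, third, and fourth summands of $\partial^{n-1}_n$, while the contributions in which an output of $f$ is "plugged into $\rhd$" reproduce the operators $\partial^k_n$ together with the second summand of $\partial^{n-1}_n$. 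The matching of shuffle sums and of the sign factors $(-1)^{m(k-j)}$ appearing in the definition of $\circ$ is the main obstacle: it is routine but bookkeeping-intensive, and is the only place where genuine calculation is required.

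Once the identification $\partial=d_\rhd$ is in place, the conclusion is formal. Since the pre-Lie algebra $(\g,\rhd)$ is a post-Lie algebra with trivial Lie bracket, Theorem \ref{post-lie-gla} gives $[\rhd,\rhd]_{\postLie}=0$. Applying the graded Jacobi identity to the degree-one element $\rhd$ yields
\begin{equation*}
[\rhd,[\rhd,f]_{\postLie}]_{\postLie}=[[\rhd,\rhd]_{\postLie},f]_{\postLie}-[\rhd,[\rhd,f]_{\postLie}]_{\postLie},
\end{equation*}
so $2\,d_\rhd^2(f)=[[\rhd,\rhd]_{\postLie},f]_{\postLie}=0$ for every $f$. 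Hence $\partial\circ\partial=d_\rhd^2=0$, as claimed.
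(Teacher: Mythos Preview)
Your proposal is correct and is essentially the same argument the paper uses. The paper's proof simply observes that $\partial$ coincides with the post-Lie coboundary operator of \cite{LST24} specialized to the case of trivial Lie bracket; since that coboundary is precisely $d_{(\pi,\rhd)}=[(\pi,\rhd),\cdot]_{\postLie}$ in the graded Lie algebra of Theorem~\ref{post-lie-gla}, setting $\pi=0$ gives exactly your identification $\partial=d_\rhd$, and $\partial^2=0$ follows from the Maurer-Cartan equation $[\rhd,\rhd]_{\postLie}=0$ as you describe. Your write-up is somewhat more self-contained than the paper's one-line citation, but the underlying mechanism is identical.
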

\begin{proof}
A pre-Lie algebra is a special post-Lie algebra, and it is straightforward to see that the above $\partial$ is exactly the  coboundary operator for post-Lie algebras given in \cite{LST24} applying to the particular case of pre-Lie algebras.
\end{proof}
	
	\begin{defi}\label{defi:cohomology of pL}
		The cohomology of the cochain complex $(\oplus _{n=0}^{+\infty}\frkC^n_{\postLie}(\g;\g),\partial)$ is called the {\bf post-Lie cohomology  of the pre-Lie algebra} $(\g,\rhd)$. We denote its $n$-th cohomology group by $\huaH^n_{\postLie}(\g;\g)$.
	\end{defi}

	\begin{rmk}
		Note that $(\oplus_{n=1}^{+\infty}\Hom(\wedge^{n-1} \g\otimes \g,\g), \partial^{n-1}_n)$ is a subcomplex, whose cohomology is exactly the usual cohomology of the pre-Lie algebra $(\g,\rhd)$. We denote this complex by $\frkC^*_{\PLie}(\g;\g)$ and its $n$-th cohomology group by $\huaH^n_{\PLie}(\g;\g)$. See \cite{Boyom,DA} for details about this cohomology and its applications. Thus, there is a natural embedding of the usual cohomology group into the post-Lie cohomology group of a pre-Lie algebra.
	\end{rmk}

\begin{rmk}
  It is straightforward to obtain that $(\oplus_{n=1}^{+\infty}\Hom(\wedge^{n-1} \g\otimes \wedge^k\g,\g), \partial^{*-1}_*)$ is a cochain complex for all $k=2,3,\cdots.$ We denote this complex by $\frkC^*_{\PLie_k}(\g;\g)$ and its $n$-th cohomology group by $\huaH^n_{\PLie_k}(\g;\g)$,~ $k=2,3,\cdots.$
\end{rmk}
	
\begin{thm}\label{cohomology-exact}
Let $(\g,\rhd)$ be a pre-Lie algebra. Then there is a short exact sequence of cochain complexes:
\begin{eqnarray}\label{pre-lie-exact}
0\longrightarrow\frkC^*_{\PLie}(\g;\g)\stackrel{\iota}{\longrightarrow}\frkC^*_{\postLie}(\g;\g)\stackrel{p}{\longrightarrow} \oplus_{k=2}^{+\infty}\frkC^*_{\PLie_k}(\g;\g)\longrightarrow 0,
\end{eqnarray}
where $\iota$ and $p$ are the inclusion map and the projection map.

Consequently, there is a long exact sequence of  cohomology groups:
$$
\cdots\longrightarrow\huaH^n_{\PLie}(\g;\g)\stackrel{\huaH^n(\iota)}{\longrightarrow}\huaH^n_{\postLie}(\g;\g)\stackrel{\huaH^n(p)}{\longrightarrow} \oplus_{k=2}^{n}\huaH^n_{\PLie_k}(\g;\g) \stackrel{c^n}\longrightarrow \huaH^{n+1}_{\PLie}(\g;\g)\longrightarrow\cdots,
$$
where the connecting map $c^n$ is defined by
$
c^n([f_0],\cdots,[f_{n-2}])=[\sum_{k=0}^{n-2}\partial^k_nf_k],$  for all $[f_k]\in \huaH^n_{\PLie_{n-k}}(\g;\g).$
\end{thm}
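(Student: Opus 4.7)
The plan is to realize the short exact sequence directly from the bigraded decomposition of $\frkC^*_{\postLie}(\g;\g)$ and then invoke the standard zig-zag construction. The decomposition $\frkC^n_{\postLie}(\g;\g)=\bigoplus_{i=0}^{n-1}\Hom(\wedge^i\g\otimes\wedge^{n-i}\g,\g)$ identifies the $i=n-1$ slot with $\frkC^n_{\PLie}(\g;\g)$, and the remaining slots $i=n-k$, for $k\in\{2,\ldots,n\}$, with pieces of the subcomplexes $\frkC^*_{\PLie_k}(\g;\g)$. Taking $\iota$ to be the inclusion of the pre-Lie slot and $p$ the projection onto its complement gives set-theoretic exactness immediately; what remains is to check that both maps are chain maps.

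For the chain-map verification, I would inspect the formula \eqref{Coboundary-O} slot by slot. On the pre-Lie slot $(n-1,1)$ the only contribution is $\partial^{n-1}_n$, which lands back in the pre-Lie slot $(n,1)$ of $\frkC^{n+1}_{\postLie}$ and reproduces the classical pre-Lie coboundary; this yields $\partial\circ\iota=\iota\circ\partial^{\PLie}$. On a slot $(k,n-k)$ with $k\leq n-2$ the coboundary splits into $\partial^k_{k+1}$, which preserves the second factor $\wedge^{n-k}\g$ and is exactly the differential of $\frkC^*_{\PLie_{n-k}}$, and $\partial^k_n$, which lands in the pre-Lie slot. After passing to the quotient the pre-Lie–valued summand is killed, so the induced differential is precisely $\bigoplus_{k=0}^{n-2}\partial^k_{k+1}$, proving that $p$ is a chain map. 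The long exact sequence in cohomology then follows from the standard snake lemma applied to \eqref{pre-lie-exact}.

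To identify the connecting map $c^n$, take a quotient cocycle represented by a tuple $(f_0,\ldots,f_{n-2})$ with $f_k\in\Hom(\wedge^k\g\otimes\wedge^{n-k}\g,\g)$ satisfying $\partial^k_{k+1}f_k=0$. Lift it to $(f_0,\ldots,f_{n-2},0)\in\frkC^n_{\postLie}(\g;\g)$ by filling the pre-Lie slot with zero and apply $\partial$: each $\partial^k_{k+1}f_k$ vanishes by the cocycle assumption, so what survives is $\partial(f_0,\ldots,f_{n-2},0)=\sum_{k=0}^{n-2}\partial^k_n f_k\in\frkC^{n+1}_{\PLie}(\g;\g)$, and its class there is $c^n([f_0],\ldots,[f_{n-2}])$. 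The main obstacle is really just disciplined bookkeeping of the bigrading together with the observation that the ``cross'' operators $\partial^k_n$ are precisely what force $\sum_{k}\partial^k_n f_k$ to land in, and be a cocycle of, the pre-Lie subcomplex; both facts are already packaged inside $\partial^2=0$ on $\frkC^*_{\postLie}$, which in turn comes from $[\rhd,\rhd]_{\postLie}=0$ via Theorem \ref{post-lie-gla}.
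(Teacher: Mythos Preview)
Your proposal is correct and follows essentially the same approach as the paper: the paper's proof is extremely terse, simply invoking formula \eqref{Coboundary-O} for the short exact sequence and ``diagram chasing'' for the connecting map, whereas you spell out explicitly why $\iota$ and $p$ are chain maps and carry out the zig-zag computation of $c^n$. No alternative decomposition or lemma is used in either argument.
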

\begin{proof}
By \eqref{Coboundary-O}, we obtain the above short exact sequence of   cochain complexes. Moreover, by diagram chasing   the connecting map $c^n$ is given by $
c^n([f_0],\cdots,[f_{n-2}])=[\sum_{k=0}^{n-2}\partial^k_nf_k]$. The proof is finished.
\end{proof}
Next we give applications of the first and the second cohomology group.

Note that $f\in \frkC^1_{\postLie}(\g;\g)=\Hom(\g,\g)$  is closed  if and only if
	\begin{eqnarray}
				\partial(f)_1(x_1\otimes x_2)&=&f(x_1)\rhd x_2+x_1\rhd f(x_2)-f(x_1\rhd x_2)=0.
	\end{eqnarray}
	Therefore, we have the following proposition.
	
	\begin{pro}
		A $1$-cochain  $ f\in \frkC^1_{\postLie}(\g;\g)$ is a $1$-cocycle if and only if $f$ is a derivation. Moreover,  $$\huaH^1(\g;\g)=\Der(\g).$$
	\end{pro}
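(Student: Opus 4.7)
The plan is to read off the coboundary $\partial f$ for an arbitrary $1$-cochain $f \in \frkC^1_{\postLie}(\g;\g) = \Hom(\g,\g)$ directly from the definition \eqref{Coboundary-O} at the smallest nontrivial degree, and to observe that what appears is precisely the derivation condition.

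First I would note that $\frkC^1_{\postLie}(\g;\g) = \bigoplus_{i=0}^{0}\Hom(\wedge^i\g\otimes\wedge^{1-i}\g,\g) = \Hom(\g,\g)$, so a $1$-cochain is simply a linear map $f\colon\g\to\g$. For $n=1$ the sum $\sum_{k=0}^{n-2}(\partial^k_{k+1}+\partial^k_n)$ in the definition of $\partial$ is empty, so $\partial$ collapses to the single piece $\partial^{n-1}_n = \partial^{0}_{1}$, landing in $\Hom(\g\otimes\g,\g)\subset\frkC^2_{\postLie}(\g;\g)$. The component in $\Hom(\wedge^2\g,\g)$ of $\partial f$ is then automatically zero, consistent with the fact that a derivation of a pre-Lie algebra is trivially compatible with the zero Lie bracket.

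Next I would specialize the displayed formula for $(\partial^{n-1}_n f_{n-1})(x_1,\ldots,x_n;x_{n+1})$ to $n=1$: the first and second sums reduce to $x_1\rhd f(x_2)$ and $f(x_1)\rhd x_2$ respectively, the sum indexed by $1\le i<j\le n$ is empty, and the last sum contributes $-f(x_1\rhd x_2)$. This yields
$$(\partial f)(x_1;x_2) = x_1\rhd f(x_2) + f(x_1)\rhd x_2 - f(x_1\rhd x_2),$$
matching the formula displayed in the statement. Vanishing of this expression for all $x_1,x_2\in\g$ is precisely the condition that $f$ be a derivation of the pre-Lie algebra $(\g,\rhd)$, equivalently of the post-Lie algebra $(\g,0,\rhd)$ in the sense of Definition \ref{post-lie-homo-defi} (the Lie-bracket compatibility being automatic because $[\cdot,\cdot]=0$).

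Finally, the identification $\huaH^1_{\postLie}(\g;\g)=\Der(\g)$ follows because $\frkC^0_{\postLie}(\g;\g) = 0$ by definition, so there are no nonzero $1$-coboundaries, and the group of $1$-cocycles coincides with the first cohomology group. There is no genuine obstacle in the argument; it is a direct unwinding of the general coboundary formula at $n=1$. The only mild care required is to verify that the summands absent for $n=1$ really are absent and that the $\Hom(\wedge^2\g,\g)$-component of $\partial f$ is indeed zero.
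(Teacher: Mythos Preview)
Your proposal is correct and follows essentially the same approach as the paper: both simply unwind the coboundary formula at $n=1$ to obtain $\partial(f)(x_1;x_2)=f(x_1)\rhd x_2+x_1\rhd f(x_2)-f(x_1\rhd x_2)$ and identify this with the derivation condition, using $\frkC^0_{\postLie}(\g;\g)=0$ to conclude $\huaH^1_{\postLie}(\g;\g)=\Der(\g)$. Your write-up is in fact more explicit than the paper's, which merely records the formula and states the proposition.
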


	Note that $\frkC^2_{\postLie}(\g;\g)=\Hom(\wedge^2\g,\g)\oplus \Hom(\otimes^2\g,\g)$.  Let $\pi\in\Hom(\wedge^2\g,\g)$ and $\omega\in\Hom(\otimes^2\g,\g)$ be  linear maps, i.e. $(\pi,\omega)\in\frkC^2_{\postLie}(\g;\g)$. Moreover, $(\pi,\omega)\in \frkC^2_{\postLie}(\g;\g)$  is closed  if and only if
\begin{eqnarray}
		\label{2-cocycle-1}\partial(\pi,\omega)_1(x;y,z)&=&x\rhd\pi(y,z)-\pi(x\rhd y,z)+\pi(x\rhd z,y)=0,\\
		\label{2-cocycle-2}\partial(\pi,\omega)_2(x,y;z)&=&-\big(\omega(x,y)-\omega(y,x)+\pi(x,y)\big)\rhd z-\omega\big(x\rhd y-y\rhd x,z\big)\\
		\nonumber&&+\omega(x,y\rhd z)+x\rhd \omega(y,z)-\omega(y,x\rhd z)-y\rhd \omega(x,z)=0.
	\end{eqnarray}
Let $\mathbb{R}[[t]]$ be the ring of power series in one variable
$t$. For any $\mathbb{R}$-linear space $\g$, we let
$\g[[t]]$ denote the vector space of formal power series in $t$ with coefficients in $\g$. Let $(\g,\rhd)$ be a pre-Lie algebra.
We consider the power series
\begin{eqnarray}
\pi_t&=&\sum_{i=0}^{+\infty}\pi_i t^i,\quad \pi_i\in \Hom_{\mathbb{R}}(\wedge^{2}\g,\g),\\
\omega_t&=&\sum_{i=0}^{+\infty}\omega_i t^i,\quad\omega_i\in \Hom_{\mathbb{R}}(\otimes^2\g,\g).
\label{eq:tdeform}
\end{eqnarray}

\begin{defi}
Let $(\g,\rhd)$ be a pre-Lie algebra. If $(\g[[t]],\pi_t,\omega_t)$ is a $\mathbb{R}[[t]]$-post-Lie algebra with $(\pi_0,\omega_0)=(0,\rhd)$, we say that $(\g[[t]],\pi_t,\omega_t)$
is a {\bf  formal post-Lie deformation} of the pre-Lie algebra $(\g,\rhd)$.
\end{defi}

A pair $(\pi_t,\omega_t)$, as given above, is a formal post-Lie deformation of a pre-Lie algebra $(\g,\rhd)$ if and only if for all $x,y,z\in \g$, the following equalities   hold:
\begin{eqnarray}
\label{deformation1}&&\pi_t(\pi_t(x,y),z)+\pi_t(\pi_t(y,z),x)+\pi_t(\pi_t(z,x),y)=0,\\
\label{deformation2}&&\omega_t(x,\pi_t(y,z))=\pi_t(\omega_t(x,y),z)+\pi_t(y,\omega_t(x,z)),\\
\label{deformation3}&&\omega_t(\omega_t(x,y)-\omega_t(y,x)+\pi_t(x,y),z)=\omega_t(x,\omega_t(y,z))-\omega_t(y,\omega_t(x,z)).
\end{eqnarray}
 Expanding the equations in \eqref{deformation1}-\eqref{deformation3} and collecting coefficients of $t^n$, we see that  \eqref{deformation1}-\eqref{deformation3} are equivalent to the system of equations
\begin{eqnarray}
\label{deformation4}&&\sum\limits_{i+j=n\atop i,j\geq0}\Big(\pi_i(\pi_j(x,y),z)+\pi_i(\pi_j(y,z),x)+\pi_i(\pi_j(z,x),y)\Big)=0,\\
\label{deformation5}&&\sum\limits_{i+j=n\atop i,j\geq0}\Big(\omega_i(x,\pi_j(y,z))-\pi_j(\omega_i(x,y),z)-\pi_j(y,\omega_i(x,z))\Big)=0,\\
\label{deformation6}&&\sum\limits_{i+j=n\atop i,j\geq0}\Big(\omega_i(\omega_j(x,y)-\omega_j(y,x)+\pi_j(x,y),z)-\omega_i(x,\omega_j(y,z))+\omega_i(y,\omega_j(x,z))\Big)=0.
\end{eqnarray}

\begin{rmk}\label{rmk}
For $n=2$, by $\pi_0=0$, condition \eqref{deformation4} is equivalent to the Jacobi identity of $\pi_1$.
 \end{rmk}

 \begin{pro}\label{pro:cocycle}
 Let $(\pi_t,\omega_t)$ be a formal post-Lie deformation of a pre-Lie algebra $(\g,\rhd)$.
Then $(\pi_1,\omega_1)\in \frkC^2_{\postLie}(\g;\g)$   is a $2$-cocycle in the post-Lie cohomology  of the pre-Lie algebra $(\g,\rhd)$.
\end{pro}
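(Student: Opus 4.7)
The plan is to extract the cocycle condition at order $t^1$ from the formal deformation equations, for which Theorem~\ref{def-post-lie} provides the conceptual backbone while \eqref{deformation4}--\eqref{deformation6} supply the direct computation.

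First we extend the differential graded Lie algebra $(C_{\postLie}^*(\g,\g),[\cdot,\cdot]_{\postLie},d_\rhd)$ of Section~\ref{sec:con} $\bk[[t]]$-linearly and apply Theorem~\ref{def-post-lie} over $\bk[[t]]$: a formal post-Lie deformation $(\g[[t]],\pi_t,\omega_t)$ corresponds to the Maurer-Cartan element
$$\Pi_t := (\pi_t,\omega_t-\rhd) = \sum_{i\geq 1}(\pi_i,\omega_i)\, t^i,$$
satisfying $d_\rhd \Pi_t + \half [\Pi_t,\Pi_t]_{\postLie} = 0$. Since $\Pi_t$ has no constant term, the bracket $[\Pi_t,\Pi_t]_{\postLie}$ starts at order $t^2$, so collecting the coefficient of $t^1$ yields $d_\rhd(\pi_1,\omega_1) = 0$. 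Under the identification of the Section~\ref{sec:con} differential with the coboundary $\partial$ of Section~\ref{sec:coh} at the second cochain level, this reads $\partial(\pi_1,\omega_1) = 0$, i.e., $(\pi_1,\omega_1)$ is a $2$-cocycle.

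As a consistency check (and the proof one would write without invoking the DGLA framework), we verify the claim by direct expansion. Taking $n=1$ in \eqref{deformation5} with $\pi_0=0$ and $\omega_0=\rhd$, only the summand $(i,j)=(0,1)$ survives and produces
$$x\rhd \pi_1(y,z) - \pi_1(x\rhd y,z) - \pi_1(y,x\rhd z) = 0,$$
which is precisely \eqref{2-cocycle-1} after using the skew-symmetry of $\pi_1$. Analogously, $n=1$ in \eqref{deformation6} keeps both contributions $(i,j)\in\{(0,1),(1,0)\}$ and, after regrouping, reproduces \eqref{2-cocycle-2}. Equation \eqref{deformation4} at $n=1$ is trivially satisfied since $\pi_0=0$.

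The only potential obstacle is the (purely notational) matching between $d_\rhd$ and $\partial$ via the grading shift $\frkC^n_{\postLie} = C_{\postLie}^{n-1}$; this is routine bookkeeping, and the direct verification sketched above confirms that the two approaches produce the same cocycle identities, so no genuine difficulty arises.
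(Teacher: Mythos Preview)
Your proof is correct. The direct expansion you give as a ``consistency check'' is exactly the paper's proof: the paper simply writes out \eqref{deformation5} and \eqref{deformation6} at $n=1$ and observes that they coincide with \eqref{2-cocycle-1} and \eqref{2-cocycle-2}.

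The DGLA argument you lead with is a genuine addition. The paper builds the differential graded Lie algebra $(C_{\postLie}^*(\g,\g),[\cdot,\cdot]_{\postLie},d_\rhd)$ in Section~\ref{sec:con} and the cochain complex $(\frkC^*_{\postLie}(\g;\g),\partial)$ in Section~\ref{sec:coh}, but never explicitly states that $\partial$ is (up to the degree shift) the same as $d_\rhd$; it only cites \cite{LST24} for the origin of $\partial$. Your argument makes this identification explicit and then reads off the cocycle condition as the $t^1$-coefficient of the Maurer--Cartan equation, which is both cleaner and explains \emph{why} the infinitesimal is a cocycle rather than merely verifying it. The cost is the bookkeeping you flag: one must actually check that $d_\rhd$ restricted to $C^1_{\postLie}=\frkC^2_{\postLie}$ reproduces the formulas \eqref{2-cocycle-1}--\eqref{2-cocycle-2} (with the correct sign), and the paper does not provide this. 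Since you supply the direct verification anyway, the argument is complete either way.
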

\begin{proof}
For $n=1$, \eqref{deformation5} is equivalent to
$$
x\rhd\pi_1(y,z)-\pi_1(x\rhd y,z)-\pi_1(y,x\rhd z)=0,
$$
and \eqref{deformation6} is equivalent to
\begin{eqnarray*}
&&(\omega_1(x,y)-\omega_1(y,x)+\pi_1(x,y))\rhd z+\omega_1(x\rhd y-y\rhd x,z)\\
&&-\omega_1(x,y\rhd z)-x\rhd\omega_1(y,z)+\omega_1(y,x\rhd z)+y\rhd\omega_1(x,z)=0.
\end{eqnarray*}
By \eqref{2-cocycle-1} and \eqref{2-cocycle-2}, we deduce that $(\pi_1,\omega_1)$ is a $2$-cocycle. The proof is finished.
\end{proof}

\begin{defi}
Let $(\pi_t,\omega_t)$ be a formal post-Lie deformation of a pre-Lie algebra $(\g,\rhd)$. The $2$-cocycle $(\pi_1,\omega_1)$ is called the {\bf infinitesimal} of the  formal post-Lie deformation $(\pi_t,\omega_t)$ of the pre-Lie algebra $(\g,\rhd)$.
\end{defi}
	
\begin{defi}
Let $(\bar{\pi}_t,\bar{\omega}_t)$ and $(\pi_t,\omega_t)$ be  formal post-Lie deformations of a pre-Lie algebra $(\g,\rhd)$. A
{\bf formal isomorphism} from $(\bar{\pi}_t,\bar{\omega}_t)$ to $(\pi_t,\omega_t)$ is a power series
$$
\Phi_t=\sum_{i\geq0}\phi_it^i:\g[[t]]\lon\g[[t]],
$$
where  $\phi_i\in \Hom_{\mathbb{R}}(\g,\g)$ with $\phi_0=\Id$, such that
\begin{eqnarray}
&&\Phi_t\circ\bar{\pi}_t= \pi_t\circ(\Phi_t\times\Phi_t),\\
&&\Phi_t\circ\bar{\omega}_t=\omega_t\circ(\Phi_t\times\Phi_t).
\end{eqnarray}
Two formal post-Lie deformations $(\bar{\pi}_t,\bar{\omega}_t)$ and $(\pi_t,\omega_t)$  are said to be {\bf equivalent} if  there exists a formal isomorphism $\Phi_t$ from $(\bar{\pi}_t,\bar{\omega}_t)$ to $(\pi_t,\omega_t)$.
\end{defi}

\begin{thm}
The infinitesimals of two equivalent formal post-Lie deformations of a pre-Lie algebra $(\g,\rhd)$ are in the same cohomology class.
\end{thm}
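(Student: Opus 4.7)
The plan is to start from the defining identities of the formal isomorphism, $\Phi_t\circ\bar\pi_t=\pi_t\circ(\Phi_t\times\Phi_t)$ and $\Phi_t\circ\bar\omega_t=\omega_t\circ(\Phi_t\times\Phi_t)$, expand both sides as power series in $t$, and extract the coefficient of $t^1$. Because $\phi_0=\Id$, $\bar\pi_0=\pi_0=0$, and $\bar\omega_0=\omega_0=\,\rhd$, only a handful of linear-order terms survive and can be collected directly.

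For the first identity, the $t^1$-coefficient of the left-hand side at $(x,y)$ is $\bar\pi_1(x,y)+\phi_1(\bar\pi_0(x,y))=\bar\pi_1(x,y)$, while on the right-hand side it is $\pi_1(x,y)+\pi_0(\phi_1(x),y)+\pi_0(x,\phi_1(y))=\pi_1(x,y)$. Hence $\bar\pi_1=\pi_1$, which matches the fact that the $\pi$-component of $\partial\phi_1$ vanishes identically: in the bicomplex of the paper, no arrow from $C^{0,1}$ lands in $C^{0,2}$, reflecting the absence of a Lie bracket on the pre-Lie algebra. For the second identity, the $t^1$-coefficients on the two sides are $\bar\omega_1(x,y)+\phi_1(x\rhd y)$ and $\omega_1(x,y)+\phi_1(x)\rhd y+x\rhd\phi_1(y)$, respectively. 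Rearranging yields
\begin{equation*}
(\bar\omega_1-\omega_1)(x,y)=\phi_1(x)\rhd y+x\rhd\phi_1(y)-\phi_1(x\rhd y),
\end{equation*}
which is exactly $(\partial^0_1\phi_1)(x;y)$ as computed in the proposition identifying $1$-cocycles with derivations. Combining the two components gives $(\bar\pi_1,\bar\omega_1)-(\pi_1,\omega_1)=\partial\phi_1$ in $\frkC^2_{\postLie}(\g;\g)$, so the two $2$-cocycles represent the same class in $\huaH^2_{\postLie}(\g;\g)$.

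There is no substantive obstacle here; the argument is a first-order expansion in $t$ followed by matching against the explicit coboundary formula on $1$-cochains. The only care needed is bookkeeping of which terms actually contribute at order $t$ (given $\bar\pi_0=\pi_0=0$), together with the observation that $\partial\phi_1$ has trivial $\pi$-part, so the equality $\bar\pi_1=\pi_1$ is forced by the shape of the cochain complex rather than being accidental.
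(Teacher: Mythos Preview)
Your proof is correct and follows essentially the same approach as the paper: expand the two formal-isomorphism identities to first order in $t$, use $\pi_0=\bar\pi_0=0$ and $\omega_0=\bar\omega_0=\rhd$ to obtain $\bar\pi_1=\pi_1$ and $\bar\omega_1-\omega_1=\partial^0_1\phi_1$, and conclude $(\bar\pi_1,\bar\omega_1)=(\pi_1,\omega_1)+\partial\phi_1$. Your additional remark that $\partial\phi_1$ has trivial $\pi$-component (since for $n=1$ the coboundary reduces to $\partial^0_1$ alone) makes explicit a point the paper leaves implicit.
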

\begin{proof}
Let $\Phi_t:(\bar{\pi}_t,\bar{\omega}_t)\lon(\pi_t,\omega_t)$ be a formal isomorphism. Then for all $x,y\in \g$, we have
\begin{eqnarray*}
\Phi_t\circ\bar{\pi}_t(x,y)&=& \pi_t\circ(\Phi_t\times\Phi_t)(x,y),\\
\Phi_t\circ\bar{\omega}_t(x,y)&=& \omega_t\circ(\Phi_t\times\Phi_t)(x,y).
\end{eqnarray*}
 Expanding the above identities and comparing coefficients of $t$, we have
\begin{eqnarray*}
\bar{\pi}_1(x,y)&=&\pi_1(x,y),\\
\bar{\omega}_1(x,y)&=&\omega_1(x,y)+\phi_1(x)\rhd y+x\rhd \phi_1(y)-\phi_1(x\rhd y).
\end{eqnarray*}
Thus, we have $(\bar{\pi}_1,\bar{\omega}_1)=(\pi_1,\omega_1)+\partial(\phi_1)$, which implies that $[(\bar{\pi}_1,\bar{\omega}_1)]=[(\pi_1,\omega_1)]\in\huaH^2_{\postLie}(\g;\g)$. The proof is finished.
\end{proof}

Let $(\g,\rhd)$ be a pre-Lie algebra. In the sequel, we will also denote the binary product $\rhd$ by $\omega$.
\begin{defi}
A formal post-Lie deformation $(\pi_t,\omega_t)$ of a  pre-Lie algebra $(\g,\omega)$ is said to be {\bf trivial} if it is equivalent to $(0,\omega)$, i.e. there exists  $\Phi_t=\sum_{i\geq0}\phi_it^i:\g[[t]]\lon\g[[t]]$, where  $\phi_i\in  \Hom_{\mathbb{R}}(\g,\g)$ with $\phi_0=\Id$, such that
\begin{eqnarray}
&&\Phi_t\circ\bar{\pi}_t= 0,\\
&&\Phi_t\circ\bar{\omega}_t= \omega\circ(\Phi_t\times\Phi_t).
\end{eqnarray}
 \end{defi}
	
\begin{defi}
A pre-Lie algebra $(\g,\omega)$ is said to be {\bf post-Lie rigid} if  every  formal post-Lie deformation of $(\g,\omega)$ is trivial.
\end{defi}

\begin{thm}
If $\huaH^2_{\postLie}(\g;\g)=0$, then the pre-Lie algebra $(\g,\omega)$ is post-Lie rigid.
\end{thm}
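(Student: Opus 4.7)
The plan is to prove rigidity by a standard order-by-order cancellation argument, using $\huaH^2_{\postLie}(\g;\g)=0$ to remove the leading nontrivial term at each stage. Let $(\pi_t,\omega_t)$ be any formal post-Lie deformation of $(\g,\omega)$. I will construct a sequence of formal isomorphisms $\Phi^{(n)}_t=\Id-\phi_n t^n$ whose infinite composition sends $(\pi_t,\omega_t)$ to $(0,\omega)$, thereby witnessing triviality.

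For the inductive step, suppose that after equivalence we have reduced to a deformation, still called $(\pi_t,\omega_t)$, satisfying $\pi_t=\pi_n t^n+O(t^{n+1})$ and $\omega_t=\omega+\omega_n t^n+O(t^{n+1})$ for some $n\geq 1$ (the base case $n=1$ is Proposition \ref{pro:cocycle}). Looking at the coefficient of $t^n$ in \eqref{deformation4}-\eqref{deformation6}, all cross-terms involving two intermediate-order coefficients vanish by the induction hypothesis, so only the terms pairing $(\pi_n,\omega_n)$ with $(\pi_0,\omega_0)=(0,\omega)$ survive. These collapse precisely to the cocycle equations \eqref{2-cocycle-1}-\eqref{2-cocycle-2}, showing that $(\pi_n,\omega_n)$ is a $2$-cocycle in $\frkC^2_{\postLie}(\g;\g)$.

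By $\huaH^2_{\postLie}(\g;\g)=0$, there exists $\phi_n\in\Hom(\g,\g)$ with $\partial(\phi_n)=-(\pi_n,\omega_n)$. Setting $\Phi^{(n)}_t=\Id-\phi_n t^n$, which is invertible in $\End(\g)[[t]]$, we define the equivalent deformation $(\bar\pi_t,\bar\omega_t)$ by the conditions $\Phi^{(n)}_t\circ\bar\pi_t=\pi_t\circ(\Phi^{(n)}_t\times\Phi^{(n)}_t)$ and $\Phi^{(n)}_t\circ\bar\omega_t=\omega_t\circ(\Phi^{(n)}_t\times\Phi^{(n)}_t)$. Expanding in powers of $t$ and comparing coefficients, exactly as in the proof that equivalent deformations have cohomologous infinitesimals, one checks that $\bar\pi_i=0$ for $i\leq n$, that $\bar\omega_0=\omega$, and that the new coefficient at order $t^n$ in $\bar\omega_t$ is $\omega_n+\partial(\phi_n)=0$. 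Thus the first nontrivial order of the deformation is pushed strictly past $n$.

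To conclude, the infinite composition $\Phi_t\coloneqq\cdots\circ\Phi^{(3)}_t\circ\Phi^{(2)}_t\circ\Phi^{(1)}_t$ is a well-defined formal power series in $\End(\g)[[t]]$: for each fixed degree $k$, the coefficient of $t^k$ stabilizes after the first $k$ factors, since $\Phi^{(n)}_t-\Id$ has vanishing coefficients in degrees $<n$. The resulting $\Phi_t$ has $\phi_0=\Id$ and sends $(\pi_t,\omega_t)$ to $(0,\omega)$ by construction, proving that $(\g,\omega)$ is post-Lie rigid. The one step meriting care is the verification that conjugation by $\Phi^{(n)}_t$ acts on the leading coefficient by addition of $\partial(\phi_n)$; this is precisely the first-order content of the Maurer-Cartan framework of Theorem \ref{def-post-lie}, where gauge equivalence of Maurer-Cartan elements is infinitesimally generated by the differential $d_\rhd=\partial$, so no further combinatorial analysis is required beyond what is already encoded in the cohomology complex.
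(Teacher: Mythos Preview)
Your proof is correct and follows essentially the same order-by-order cancellation argument as the paper: both identify the leading coefficient $(\pi_n,\omega_n)$ as a $2$-cocycle, kill it via a coboundary using $\huaH^2_{\postLie}=0$, and iterate. The only cosmetic differences are that the paper carries out the $n=1$ step explicitly with $\Phi_t=\Id+\phi_1 t$ and then says ``by repeating the argument'', whereas you phrase it as a clean induction and spell out the convergence of the infinite composition; you should double-check the sign convention between $\Phi^{(n)}_t=\Id-\phi_n t^n$ and $\partial(\phi_n)=-(\pi_n,\omega_n)$, since with that choice the order-$t^n$ coefficient of $\bar\omega_t$ comes out to $\omega_n-\partial(\phi_n)_\omega$ rather than $\omega_n+\partial(\phi_n)_\omega$, but this is fixed by flipping either sign and does not affect the argument.
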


\begin{proof}
 Let $(\pi_t,\omega_t)$ be a  formal post-Lie deformation of a pre-Lie algebra $(\g,\omega)$. By Proposition \ref{pro:cocycle}, we deduce that $(\pi_1,\omega_1)$ is a $2$-cocycle. By $\huaH^2_{\postLie}(\g;\g)=0,$ there exists a 1-cochain $\phi_1\in  \frkC^1_{\postLie}(\g;\g)$ such that
\begin{eqnarray}
\label{rigid}(\pi_1,\omega_1)=-\partial(\phi_1)=(0,-\omega\circ(\phi_1\times \Id)-\omega\circ(\Id\times \phi_1)+\phi_1\circ \omega).
\end{eqnarray}
 Then setting $\Phi_t={\Id}+\phi_1 t$, we have a deformation $(\bar{\pi}_t,\bar{\omega}_t)$, where
\begin{eqnarray*}
\bar{\pi}_t(x,y)&=&\big(\Phi_t^{-1}\circ \pi_t\circ(\Phi_t\times\Phi_t)\big)(x,y),\\
\bar{\omega}_t(x,y)&=&\big(\Phi_t^{-1}\circ \omega_t\circ(\Phi_t\times\Phi_t)\big)(x,y).
\end{eqnarray*}
Thus, $(\bar{\pi}_t,\bar{\omega}_t)$ is equivalent to $(\pi_t,\omega_t)$. Moreover, we have
\begin{eqnarray*}
\bar{\pi}_t(x,y)&=&({\Id}-\phi_1t+\phi_1^2t^{2}+\cdots+(-1)^i\phi_1^it^{i}+\cdots)(\pi_t(x+\phi_1(x)t,y+\phi_1(y)t)),\\
\bar{\omega}_t(x,y)&=&({\Id}-\phi_1t+\phi_1^2t^{2}+\cdots+(-1)^i\phi_1^it^{i}+\cdots)(\omega_t(x+\phi_1(x)t,y+\phi_1(y)t)).
\end{eqnarray*}
Thus, we have
\begin{eqnarray*}
\bar{\pi}_t(x,y)&=&\bar{\pi}_{2}(x,y)t^{2}+\cdots,\\
\bar{\omega}_t(x,y)&=&\omega(x,y)+(\omega_1(x,y)+\omega(x,\phi_1(y))+\omega(\phi_1(x),y)-\phi_1(\omega(x,y)))t+\bar{\omega}_{2}(x,y)t^{2}+\cdots.
\end{eqnarray*}
By \eqref{rigid}, we have
\begin{eqnarray*}
\bar{\pi}_t(x,y)&=&\bar{\pi}_{2}(x,y)t^{2}+\cdots,\\
\bar{\omega}_t(x,y)&=&\omega(x,y)+\bar{\omega}_{2}(x,y)t^{2}+\cdots.
\end{eqnarray*}
Then by repeating the argument, we can show that $(\pi_t,\omega_t)$ is equivalent to $(0,\omega)$.
\end{proof}

\section{Post-Lie deformations of pre-Lie algebras in Regularity Structures}\label{sec:app}

In this section, we want to apply the formulism described above to the post-Lie algebra appearing in the context of Regularity Structures \cite{BK,JZ}.

Decorated trees as introduced in
\cite{BHZ} are described in the following way. We suppose given two symbols $I$ and $\Xi$ and let $ \mathcal{D} := \lbrace I,\Xi \rbrace \times \mathbb{N}^{d+1}$ define the set of edge decorations. These two symbols represent a convolution with a kernel $ K $ and a noise term $ \xi $ that correspond to the mild formulation of a singular stochastic partial differential equation of the form:
\[
u = K* \left( f(\partial^{\alpha} u, \alpha \in \mathbb{R}^{d+1}) + g(\partial^{\alpha} u, \alpha \in \mathbb{R}^{d+1}) \xi  \right)
\]
where $ * $ is space-time convolution and $f$ and $g$ are non-linearities depending on a finite number of $ \partial^{\alpha} u, \alpha \in \mathbb{R}^{d+1}$.
For a system and more noises, one has to add more symbols.
 Decorated trees over $ \mathcal{D} $ are of the form  $T_{\Labe}^{\Labn} =  (T,\Labn,\Labe) $ where $T$ is a non-planar rooted tree with node set $N_T$ and edge set $E_T$. The maps $\Labn : N_T \rightarrow \mathbb{N}^{d+1}$ and $\Labe : E_T \rightarrow \mathcal{D}$ are node, respectively edge, decorations. We denote the set of decorated trees by $ \mathfrak{T} $.
We introduce a symbolic notation for denoting these decorated trees:
\begin{enumerate}
	\item[--] An edge decorated by  $ (I,a) \in \mathcal{D} $  is denoted by $ I_{a} $. The symbol $  I_{a} $ is also viewed as  the operation that grafts a tree onto a new root via a new edge with edge decoration $(I,a) $. The new root at hand remains decorated with $0$.

	\item[--]  An edge decorated by $ (\Xi,0) \in \mathcal{D} $ is denoted by $  \Xi $.  We suppose that these edges are terminal edges. A terminal edge of a tree $T$ is an edge with one extremity being a leaf of $T$. We suppose also that these edges have  zero node decoration at their leaves that it appears at most once for each node.
	 Let us stress that we do not allow edges of the form $(\Xi,b),~b\neq 0$ and that an edge which is not a terminal edge must be decorated by $(I,a)$.
	
	\item[--] A factor $ X^{\ell}$   encodes a single node  $ \bullet^{\ell} $ decorated by $ \ell \in \mathbb{N}^{d+1}$. We write $ X_i$, $ i \in \lbrace 0,1,\ldots,d\rbrace $, to denote $ X^{e_i}$. Here, we have denoted by $ e_i $ the vector of $ \mathbb{N}^{d+1} $ with $ 1 $ in $i$th position and $ 0 $ otherwise. The element $ X^0 $ is identified with the empty tree $\mathbf{1}$.
\end{enumerate}

Given a decorated tree $ \tau $ there exist decorated trees $ \tau_1, ..., \tau_r $ such that
\begin{equation} \label{decomposition decorated trees}
	\tau = X^{\ell} \Xi^m  \prod_{i=1}^r I_{a_i}(\tau_i),
\end{equation}
where $m \in \lbrace 0,1 \rbrace$, $\ell, a_i \in \mathbb{N}^{d+1}$ and the notation means that $ \tau $ is formed of a root decorated by $ \ell $. The edges connected to this root are: an edge decorated by $ (\Xi,0) $, edges decorated by $(I, a_i)$ with the other extremities being the roots of the $\tau_i$.
A tree of the form $ I_a(\tau) $ is called a planted tree as there is only one edge connecting the root to the rest of the tree.
The decomposition \eqref{decomposition decorated trees} can be viewed as a unique product decomposition for the tree product as  $X^{\ell}$ is the decorated tree
$\bullet^{\ell}$ and $\Xi$ is the decorated tree with one edge decorated by $(\Xi,0)$.

Below, we present an examples of  decorated trees:
\begin{equation*}
	\tau = X^{\alpha} \Xi I_a(X^{\beta})  =   \begin{tikzpicture}[scale=0.2,baseline=0.1cm]
		\node at (0,0)  [dot,label= {[label distance=-0.2em]below: \scriptsize  $  \alpha   $} ] (root) {};
		\node at (2,4)  [dot,label={[label distance=-0.2em]above: \scriptsize  $ \beta $}] (right) {};
		\node at (-2,4)  [dot,label={[label distance=-0.2em]above: \scriptsize  $ $} ] (left) {};
		\draw[kernel1] (right) to
		node [sloped,below] {\small }     (root); \draw[kernel1] (left) to
		node [sloped,below] {\small }     (root);
		\node at (-1,2) [fill=white,label={[label distance=0em]center: \scriptsize  $ \Xi $} ] () {};
		\node at (1,2) [fill=white,label={[label distance=0em]center: \scriptsize  $ I_a $} ] () {};
	\end{tikzpicture}, \quad
	I_b(\tau) =
	\begin{tikzpicture}[scale=0.2,baseline=0.1cm]
		\node at (0,0)  [dot,label= {[label distance=-0.2em]below: \scriptsize  $  $} ] (root) {};
		\node at (-3,3)  [dot,label={[label distance=-0.2em]left: \scriptsize  $ \alpha $} ] (left) {};
		\node at (0,7)  [dot,label={[label distance=-0.2em]above: \scriptsize  $ \beta $} ] (center) {};
		\node at (-6,7)  [dot,label={[label distance=-0.2em]above: \scriptsize  $  $} ] (centerr) {};
		\draw[kernel1] (left) to
		node [sloped,below] {\small }     (root);
		\draw[kernel1] (center) to
		node [sloped,below] {\small }     (left);
		\draw[kernel1] (centerr) to
		node [sloped,below] {\small }     (left);
		\node at (-1.5,1.5) [fill=white,label={[label distance=0em]center: \scriptsize  $ I_b $} ] () {};
		\node at (-1.5,5) [fill=white,label={[label distance=0em]center: \scriptsize  $ I_a $} ] () {};
		\node at (-4.5,5) [fill=white,label={[label distance=0em]center: \scriptsize  $ \Xi $} ] () {};
	\end{tikzpicture}
\end{equation*}

We define a product called grafting product:
\begin{equation*}
	\sigma \curvearrowright^a \tau:=\sum_{v\in  N_{\tau} } \sigma \curvearrowright^a_v  \tau,
\end{equation*}
where $\sigma $ and $\tau$ are two decorated rooted trees, $ N_\tau $ is the set of vertices of $ \tau $ and where $\sigma \curvearrowright^a_v \tau$ is obtained by grafting the tree $\sigma$ on the tree $\tau$ at vertex $v$ by means of a new edge decorated by $a\in \mathbb{N}^{d+1}$. Grafting onto noise-type edges, that is, edges decorated by $ (\Xi,0) $ is forbidden. Below, we provide an example of this grafting product:
\begin{equation}
	\label{example_1}
	\bullet^{\alpha}  \curvearrowright^a   \begin{tikzpicture}[scale=0.2,baseline=0.1cm]
		\node at (0,0)  [dot,label= {[label distance=-0.2em]below: \scriptsize  $  \gamma   $} ] (root) {};
		\node at (2,4)  [dot,label={[label distance=-0.2em]above: \scriptsize  $ \beta $}] (right) {};
		\node at (-2,4)  [dot,label={[label distance=-0.2em]above: \scriptsize  $ $} ] (left) {};
		\draw[kernel1] (right) to
		node [sloped,below] {\small }     (root); \draw[kernel1] (left) to
		node [sloped,below] {\small }     (root);
		\node at (-1,2) [fill=white,label={[label distance=0em]center: \scriptsize  $ \Xi $} ] () {};
		\node at (1,2) [fill=white,label={[label distance=0em]center: \scriptsize  $ I_b $} ] () {};
	\end{tikzpicture} = \begin{tikzpicture}[scale=0.2,baseline=0.1cm]
		\node at (0,0)  [dot,label= {[label distance=-0.2em]below: \scriptsize  $  \gamma   $} ] (root) {};
		\node at (0,5)  [dot,label= {[label distance=-0.2em]above: \scriptsize  $  \alpha   $} ] (center) {};
		\node at (3,4)  [dot,label={[label distance=-0.2em]above: \scriptsize  $ \beta $}] (right) {};
		\node at (-3,4)  [dot,label={[label distance=-0.2em]above: \scriptsize  $ $} ] (left) {};
		\draw[kernel1] (right) to
		node [sloped,below] {\small }     (root);
		\draw[kernel1] (center) to
		node [sloped,below] {\small }     (root);
		\draw[kernel1] (left) to
		node [sloped,below] {\small }     (root);
		\node at (-2,2) [fill=white,label={[label distance=0em]center: \scriptsize  $ \Xi $} ] () {};
		\node at (2,2) [fill=white,label={[label distance=0em]center: \scriptsize  $ I_b $} ] () {};
		\node at (0,2.5) [fill=white,label={[label distance=0em]center: \scriptsize  $ I_a $} ] () {};
	\end{tikzpicture} + \begin{tikzpicture}[scale=0.2,baseline=0.1cm]
		\node at (0,0)  [dot,label= {[label distance=-0.2em]below: \scriptsize  $  \gamma  $} ] (root) {};
		\node at (0,8)  [dot,label= {[label distance=-0.2em]above: \scriptsize  $  \alpha   $} ] (center) {};
		\node at (2,4)  [dot,label={[label distance=-0.2em]right: \scriptsize  $ \beta $}] (right) {};
		\node at (-2,4)  [dot,label={[label distance=-0.2em]above: \scriptsize  $ $} ] (left) {};
		\draw[kernel1] (right) to
		node [sloped,below] {\small }     (root);
		\draw[kernel1] (center) to
		node [sloped,below] {\small }     (right); \draw[kernel1] (left) to
		node [sloped,below] {\small }     (root);
		\node at (-1,2) [fill=white,label={[label distance=0em]center: \scriptsize  $ \Xi $} ] () {};
		\node at (1,2) [fill=white,label={[label distance=0em]center: \scriptsize  $ I_b $} ] () {};
		\node at (1,6) [fill=white,label={[label distance=0em]center: \scriptsize  $ I_a $} ] () {};
	\end{tikzpicture}.
\end{equation}
 The family of grafting products $ (\curvearrowright^b  )_{b \in \mathbb{N}^{d+1}} $ forms a multi-pre-Lie algebra structure on the decorated rooted trees, in the sense that they satisfy the following identities:
\begin{equation*}
	\left( \tau_1  \curvearrowright^a \tau_2 \right)  \curvearrowright^b \tau_3 - 	 \tau_1  \curvearrowright^a (  \tau_2  \curvearrowright^b \tau_3 ) = 	( \tau_2  \curvearrowright^b \tau_1 )  \curvearrowright^a \tau_3 - 	 \tau_2  \curvearrowright^b (  \tau_1  \curvearrowright^a \tau_3 )
\end{equation*}
where the $ \tau_i $ are decorated trees and $ a,b $ belong to $ \mathbb{N}^{d+1} $.
It has first been introduced in \cite[Prop. 4.21]{BCCH} for writing up renormalised equations.
This multi-pre-Lie algebra can be summarised into a single pre-Lie structure on the space of planted trees, given by the product:
\begin{equation*}
	I_{a}(\sigma) \curvearrowright I_{b}(\tau):= I_{b}(\sigma \curvearrowright^a \tau).
\end{equation*}
This was first noticed in \cite{F2018} (see also \cite[Prop. 3.2]{BM22}).

Furthermore, the products $ \curvearrowright^a $ can be deformed via a multi-pre-Lie algebra isomorphism described in \cite[Sec. 2.2]{BM22}. The deformed products, first introduced in \cite{BCCH}, are given by:
\begin{equation*}
	\sigma \widehat{\curvearrowright}^a \tau:=\sum_{v\in N_{\tau}}\sum_{\ell\in\mathbb{N}^{d+1}}{\Labn_v \choose \ell} \sigma  \curvearrowright_v^{a-\ell}(\uparrow_v^{-\ell} \tau),
\end{equation*}
\label{deformed_grafting_a}
where $ \Labn_v \in \mathbb{N}^{d+1}$ denotes  the decoration at the vertex $ v $ and the operator $ \uparrow_v^{-\ell} $ is defined  as adding $ - \ell $ to the node decoration of $ v $.
If there exists a unique pair $(b,\alpha)\in \mathbb{N}^{d+1} \times \mathbb{N}^{d+1}$ such that $a=\ell+b$ and $\Labn_v =\ell+\alpha$ then one can compute the decorations of the new decorated tree. If this condition is not satisfied, then the decorated tree is set to be zero. Given a scaling $\s \in\mathbb{N}_0^{d+1} = \mathbb{N}^{d+1} \setminus \lbrace 0 \rbrace$ we define the \textsl{grading} of a tree as the sum of the gradings of its edges and denote it by $ |\cdot|_{\text{grad}} $:
\begin{equation*}
	|\tau|_{\text{grad}}:=\sum_{e\in E_{\tau}}\big|\Labe(e) \big|_{\s}
\end{equation*}
where $ E_{\tau} $ are the edges of $ \tau $, $ \Labe(e) = (\Labe(e)_1,\Labe(e)_2) \in \mathcal{D} $ is the decoration of the edge $ e $ and $\big|\Labe(e) \big|_{\s}$ is defined as $\big|\Labe(e)_2 \big|_{\s}$. For a given $ \mathbf{n} \in \mathbb{N}^{d+1} $, one has:
\begin{equation*}
	|\mathbf n|_{\s}:= \sum_{i=0}^d s_i n_i.
\end{equation*}
The scaling $\s$ comes from the anisotropic Euclidean norm used in the context of singular SPDEs. In practice, when the kernel  $K$ is given as the inverse of the operator $ \partial_t - \Delta $, the first coordinate of $\mathbb{R}^{d+1}$ is the time and counts double in comparison to the other  spatial components ($\s  = (2,1,...,1)$). It is called the parabolic scaling.
One notices that $\widehat{\curvearrowright}^a$ is a deformation of $\curvearrowright^a$ in the sense that:
\begin{equation*}
	\sigma \widehat{\curvearrowright}^a \tau = \sigma \curvearrowright^a \tau +
	\hbox{ lower grading terms}.
\end{equation*}
One may summarise the above family of multi-pre-Lie products into a single pre-Lie product $\widehat{\curvearrowright}$:
\begin{equation*}
	I_{a}(\sigma) \, \widehat{\curvearrowright} \, I_{b}(\tau):= I_{b}(\sigma \, \widehat{\curvearrowright}^{a} \, \tau).
\end{equation*}
Below, we provide an example illustrating the product $ \widehat{\curvearrowright} $:
\begin{equation*}
	\begin{tikzpicture}[scale=0.2,baseline=0.1cm]
		\node at (0,0)  [dot,label= {[label distance=-0.2em]below: \scriptsize  $    $} ] (root) {};
		\node at (0,4)  [dot,label={[label distance=-0.2em]above: \scriptsize  $ \alpha $}] (center) {};
		\draw[kernel1] (center) to
		node [sloped,below] {\small }     (root);
		\node at (0,2) [fill=white,label={[label distance=0em]center: \scriptsize  $ I_a $} ] () {};
	\end{tikzpicture} 	 \widehat{\curvearrowright} \begin{tikzpicture}[scale=0.2,baseline=0.1cm]
		\node at (0,0)  [dot,label= {[label distance=-0.2em]below: \scriptsize  $     $} ] (root) {};
		\node at (0,8)  [dot,label= {[label distance=-0.2em]above: \scriptsize  $    $} ] (center) {};
		\node at (2,4)  [dot,label={[label distance=-0.2em]right: \scriptsize  $ \beta $}] (right) {};
		\draw[kernel1] (right) to
		node [sloped,below] {\small }     (root);
		\draw[kernel1] (center) to
		node [sloped,below] {\small }     (right);
		\node at (1,2) [fill=white,label={[label distance=0em]center: \scriptsize  $ I_b $} ] () {};
		\node at (1,6) [fill=white,label={[label distance=0em]center: \scriptsize  $ \Xi $} ] () {};
	\end{tikzpicture}   =   \begin{tikzpicture}[scale=0.2,baseline=0.1cm]
		\node at (0,0)  [dot,label= {[label distance=-0.2em]below: \scriptsize  $     $} ] (root) {};
		\node at (0,8)  [dot,label= {[label distance=-0.2em]above: \scriptsize  $    $} ] (center) {};
		\node at (4,8)  [dot,label= {[label distance=-0.2em]above: \scriptsize  $ \alpha   $} ] (centerl) {};
		\node at (2,4)  [dot,label={[label distance=-0.2em]right: \scriptsize  $ \beta $}] (right) {};
		\draw[kernel1] (right) to
		node [sloped,below] {\small }     (root);
		\draw[kernel1] (center) to
		node [sloped,below] {\small }     (right);
		\draw[kernel1] (centerl) to
		node [sloped,below] {\small }     (right);
		\node at (1,2) [fill=white,label={[label distance=0em]center: \scriptsize  $ I_b $} ] () {};
		\node at (3,6) [fill=white,label={[label distance=0em]center: \scriptsize  $ I_a $} ] () {};
		\node at (1,6) [fill=white,label={[label distance=0em]center: \scriptsize  $ \Xi $} ] () {};
	\end{tikzpicture} + \sum_{\ell \in \mathbb{N}^{d+1}_0 }{\beta \choose \ell} \, \, \begin{tikzpicture}[scale=0.2,baseline=0.1cm]
		\node at (0,0)  [dot,label= {[label distance=-0.2em]below: \scriptsize  $     $} ] (root) {};
		\node at (0,8)  [dot,label= {[label distance=-0.2em]above: \scriptsize  $    $} ] (center) {};
		\node at (4,8)  [dot,label= {[label distance=-0.2em]above: \scriptsize  $ \alpha   $} ] (centerl) {};
		\node at (2,4)  [dot,label={[label distance=-0.2em]right: \scriptsize  $ \beta - \ell $}] (right) {};
		\draw[kernel1] (right) to
		node [sloped,below] {\small }     (root);
		\draw[kernel1] (center) to
		node [sloped,below] {\small }     (right);
		\draw[kernel1] (centerl) to
		node [sloped,below] {\small }     (right);
		\node at (1,2) [fill=white,label={[label distance=0em]center: \scriptsize  $ I_b $} ] () {};
		\node at (3,6) [fill=white,label={[label distance=0em]center: \scriptsize  $ \quad I_{a - \ell}  $} ] () {};
		\node at (1,6) [fill=white,label={[label distance=0em]center: \scriptsize  $ \Xi $} ] () {};
	\end{tikzpicture}
\end{equation*}
Another important operation we will need to define on the space of trees is $ \uparrow^{i} $:
\begin{equation*}
	\uparrow^{i} \tau  = \sum_{v \in N_{\tau}} \uparrow^{e_i}_v \tau,
\end{equation*}
with the convention that $ \uparrow^{e_i}_v \tau= 0 $ if $v$ is a leaf which is part of an edge decorated by $(\Xi,0)$.
 We provide below an example of computation:
\begin{equation*}
	\uparrow^{i} \begin{tikzpicture}[scale=0.2,baseline=0.1cm]
		\node at (0,0)  [dot,label= {[label distance=-0.2em]below: \scriptsize  $  \gamma  $} ] (root) {};
		\node at (2,4)  [dot,label={[label distance=-0.2em]above: \scriptsize  $ \beta $}] (right) {};
		\node at (-2,4)  [dot,label={[label distance=-0.2em]above: \scriptsize  $ $} ] (left) {};
		\draw[kernel1] (right) to
		node [sloped,below] {\small }     (root); \draw[kernel1] (left) to
		node [sloped,below] {\small }     (root);
		\node at (-1,2) [fill=white,label={[label distance=0em]center: \scriptsize  $ \Xi $} ] () {};
		\node at (1,2) [fill=white,label={[label distance=0em]center: \scriptsize  $ I_b $} ] () {};
	\end{tikzpicture} =  \begin{tikzpicture}[scale=0.2,baseline=0.1cm]
		\node at (0,0)  [dot,label= {[label distance=-0.2em]below: \scriptsize  $  \gamma + e_i  $} ] (root) {};
		\node at (2,4)  [dot,label={[label distance=-0.2em]above: \scriptsize  $ \beta $}] (right) {};
		\node at (-2,4)  [dot,label={[label distance=-0.2em]above: \scriptsize  $ $} ] (left) {};
		\draw[kernel1] (right) to
		node [sloped,below] {\small }     (root); \draw[kernel1] (left) to
		node [sloped,below] {\small }     (root);
		\node at (-1,2) [fill=white,label={[label distance=0em]center: \scriptsize  $ \Xi $} ] () {};
		\node at (1,2) [fill=white,label={[label distance=0em]center: \scriptsize  $ I_b $} ] () {};
	\end{tikzpicture} +  \begin{tikzpicture}[scale=0.2,baseline=0.1cm]
		\node at (0,0)  [dot,label= {[label distance=-0.2em]below: \scriptsize  $  \gamma  $} ] (root) {};
		\node at (2,4)  [dot,label={[label distance=-0.2em]above: \scriptsize  $ \beta + e_i $}] (right) {};
		\node at (-2,4)  [dot,label={[label distance=-0.2em]above: \scriptsize  $ $} ] (left) {};
		\draw[kernel1] (right) to
		node [sloped,below] {\small }     (root); \draw[kernel1] (left) to
		node [sloped,below] {\small }     (root);
		\node at (-1,2) [fill=white,label={[label distance=0em]center: \scriptsize  $ \Xi $} ] () {};
		\node at (1,2) [fill=white,label={[label distance=0em]center: \scriptsize  $ I_b $} ] () {};
	\end{tikzpicture}.
\end{equation*}

This operator is a derivation for the grafting  product $ \curvearrowright^a $ in the sense that
\begin{equation*} \label{derivation_i}
	\uparrow^{i} \left( \sigma \curvearrowright^a \tau  \right) =  (\uparrow^{i} \sigma) \curvearrowright^a  \tau +  \sigma \curvearrowright^a \, ( \uparrow^{i} \tau).
\end{equation*}
We define the following spaces:
\begin{equation*}
	\begin{aligned}
		\mathcal{V} & = \Big \langle  \{ I_a(\tau), \, a \in \mathbb{N}^{d+1}, \, \tau \in \mathfrak{T} \} \cup \{ X_{i} \}_{i = 0,..., d} \Big \rangle_{\mathbb{R}}, \\
		\tilde{\mathcal{V}} & = \Big  \langle \{ I_a(\tau), \, a \in \mathbb{N}^{d+1}, \, \tau \in \mathfrak{T} \} \Big \rangle_{\mathbb{R}}.
	\end{aligned}
\end{equation*}
The  space $ \tilde{\mathcal{V}} $ is the linear span of planted decorated trees and $ \mathcal{V}$ is the linear span of planted trees with the monomials $ X_i $. In the next definition, we recall the post-Lie algebra first introduced in \cite{BK}.
\begin{defi} {\rm(\cite[Definition 4.1 and Definition 4.2]{BK})}
We define the Lie bracket on $\mathcal{V}=\tilde{\mathcal{V}}\oplus\mathbb{R}X_0\oplus\ldots\oplus \mathbb{R}X_d$ as $[x, y]_{1} = 0$ for $x, y \in \tilde{\mathcal{V}}$, $[x, y]_{1} = 0$ for $x, y \in \mathbb{R}X_0\oplus\ldots\oplus \mathbb{R}X_d$ and as
	\begin{equation*} \label{Lie-bracket}
		[I_a(\tau),X_i]_{1} =  I_{a-e_i}(\tau).
	\end{equation*}
We define a product $ \widehat{\triangleright} $ on $ \mathcal{V} $ for every $ a,b \in \mathbb{N}^{d+1}, \, i,j \in  \lbrace 0,\ldots,d\rbrace $ as:
	\begin{equation*}
		X_i \, \widehat{\triangleright}  \,  I_{a}(\tau) = I_{a}(  \uparrow^i \tau), \quad I_{a}(\tau) \,  \widehat{\triangleright}  \, X_{i} = 0, \quad  X_i \, \widehat{\triangleright}  \, X_{j} = 0,
	\end{equation*}
	and
	\begin{equation*}
		I_{a}(\sigma) \, \widehat{\triangleright}  \, I_{b}(\tau) = I_{a}(\sigma) \, \widehat{\curvearrowright} \,I_{b}(\tau).
	\end{equation*}
	
\end{defi}

\begin{thm} {\rm(\cite[Theorem 4.4]{BK})}\label{pre-lie-def-to-post}
	The triple $(\mathcal{V}, [\cdot,\cdot]_{1}, \widehat{\triangleright})$ is a post-Lie algebra.
\end{thm}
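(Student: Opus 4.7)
The plan is to verify Definition \ref{post-lie-defi} on the canonical generators of $\mathcal{V} = \tilde{\mathcal{V}} \oplus \mathbb{R}X_0 \oplus \cdots \oplus \mathbb{R}X_d$ by a case analysis based on the types (planted tree or monomial) of the inputs. First I would check that $[\cdot,\cdot]_1$ is a Lie bracket: antisymmetry holds by construction, and for the Jacobi identity I observe that $[\cdot,\cdot]_1$ is supported only on mixed pairs $(I_a(\tau), X_i)$ with image in $\tilde{\mathcal{V}}$, so the only potentially nonzero contributions to $\sum_{\mathrm{cyc}}[[u,v]_1,w]_1$ occur when two of $u, v, w$ are among the $X_\ell$'s and one is a planted tree $I_a(\sigma)$; in that situation the two nonzero terms are $\pm I_{a-e_i-e_j}(\sigma)$ and cancel.

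For axiom \eqref{Post-1}, $x \widehat{\triangleright}[y,z]_1 = [x \widehat{\triangleright} y, z]_1 + [y, x \widehat{\triangleright} z]_1$, the only nontrivial case (up to antisymmetry of the bracket) is $(y,z) = (I_a(\sigma), X_j)$, in which the LHS equals $x \widehat{\triangleright} I_{a-e_j}(\sigma)$. Both for $x = X_i$ (yielding $I_{a-e_j}(\uparrow^i \sigma)$) and for $x = I_c(\rho)$ (yielding $I_{a-e_j}(\rho \,\widehat{\curvearrowright}^c\, \sigma)$), this matches the first term $[x \widehat{\triangleright} I_a(\sigma), X_j]_1$ of the RHS, because $\widehat{\triangleright}$ preserves the outer edge label $a$; the second bracket on the RHS vanishes since $x \widehat{\triangleright} X_j = 0$.

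The main work is axiom \eqref{Post-2}, $([x,y]_1 + x \widehat{\triangleright} y - y\widehat{\triangleright} x)\widehat{\triangleright} z = x \widehat{\triangleright}(y \widehat{\triangleright} z) - y \widehat{\triangleright}(x \widehat{\triangleright} z)$. When $x, y$ are both planted trees $[x,y]_1 = 0$ and the identity reduces verbatim to the pre-Lie relation for $\widehat{\curvearrowright}$ recalled in the paragraph preceding the theorem. When $x, y \in \{X_0, \ldots, X_d\}$ both sides vanish except for the iterated action $\uparrow^i \uparrow^j - \uparrow^j \uparrow^i$, which is zero since increments of distinct components of node decorations commute. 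All sub-cases with $z = X_k$ collapse trivially to $0 = 0$ because $u \widehat{\triangleright} X_k = 0$ for all $u$. The substantive mixed case is $x = X_i$, $y = I_a(\sigma)$, $z = I_b(\tau)$, where after peeling the outer $I_b(\cdot)$ the identity reduces to the deformed derivation relation
\[
\uparrow^i\!\bigl(\sigma\, \widehat{\curvearrowright}^a\, \tau\bigr) = (\uparrow^i \sigma)\, \widehat{\curvearrowright}^a\, \tau + \sigma\, \widehat{\curvearrowright}^a\, (\uparrow^i \tau) - \sigma\, \widehat{\curvearrowright}^{a-e_i}\, \tau.
\]

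This deformed derivation identity is the main obstacle. I would prove it by expanding $\widehat{\curvearrowright}^a = \sum_{v,\ell} \binom{\mathfrak{n}_v}{\ell}\, \curvearrowright^{a-\ell}_v \circ \uparrow^{-\ell}_v$ on the left-hand side, applying the strict derivation property of $\uparrow^i$ with respect to the undeformed $\curvearrowright^{a-\ell}$ recalled before the theorem, and carefully separating the action of $\uparrow^i$ on a node of $\sigma$, on a node of $\tau$ different from the grafting vertex $v$, and on the grafting vertex $v$ itself (where the decoration has been shifted from $\mathfrak{n}_v$ to $\mathfrak{n}_v - \ell$). Comparing to the right-hand side and shifting $\ell \mapsto \ell + e_i$ in the contribution of the grafting-vertex term, the defect is governed by Pascal's rule in the $i$-th component, $\binom{\mathfrak{n}_v + e_i}{\ell} - \binom{\mathfrak{n}_v}{\ell} = \binom{\mathfrak{n}_v}{\ell - e_i}$, which exactly reconstitutes the correction $-\sigma\, \widehat{\curvearrowright}^{a-e_i}\, \tau$ and produces the desired equality.
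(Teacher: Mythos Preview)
Your proposal is correct, but note that the paper does not actually supply a proof of this theorem: it is quoted verbatim from \cite[Theorem~4.4]{BK} and followed immediately by a remark, with no argument given. So there is nothing to compare against in the present paper, and your verification stands on its own.

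The case analysis you outline is essentially the one carried out in \cite{BK}. The Jacobi identity and the derivation axiom \eqref{Post-1} are indeed trivial once one notes that $[\cdot,\cdot]_1$ is supported on mixed pairs and lands in $\tilde{\mathcal V}$, and that $\widehat{\triangleright}$ preserves the outer edge decoration. For axiom \eqref{Post-2} you correctly isolate the only nontrivial mixed case $(x,y,z)=(X_i,I_a(\sigma),I_b(\tau))$ and reduce it to the deformed Leibniz identity
\[
\uparrow^i\!\bigl(\sigma\,\widehat{\curvearrowright}^a\,\tau\bigr) = (\uparrow^i\sigma)\,\widehat{\curvearrowright}^a\,\tau + \sigma\,\widehat{\curvearrowright}^a\,(\uparrow^i\tau) - \sigma\,\widehat{\curvearrowright}^{a-e_i}\,\tau.
\]
Your Pascal-rule argument for this identity is sound: after expanding $\widehat{\curvearrowright}^a$ and splitting $\uparrow^i$ according to whether it hits a node of $\sigma$, a node of $\tau$ distinct from the grafting vertex, or the grafting vertex itself, the discrepancy between the two sides is governed by $\binom{\mathfrak n_v+e_i}{\ell}-\binom{\mathfrak n_v}{\ell}=\binom{\mathfrak n_v}{\ell-e_i}$, and the shift $\ell\mapsto\ell+e_i$ recovers exactly the correction term $-\sigma\,\widehat{\curvearrowright}^{a-e_i}\,\tau$. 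One small caveat: when you invoke ``the pre-Lie relation for $\widehat{\curvearrowright}$ recalled in the paragraph preceding the theorem'' for the all-planted case, what the paper actually records is that the deformed products are obtained from the $\curvearrowright^a$ by a multi-pre-Lie \emph{isomorphism}; the pre-Lie identity for $\widehat{\curvearrowright}$ then follows, but is not stated explicitly.
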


\begin{rmk}
By the Guin-Oudom functor, the universal enveloping
algebra of the post-Lie algebra $(\mathcal{V}, [\cdot,\cdot]_{1}, \widehat{\triangleright})$ is a post-Hopf algebra $(U(\mathcal{V}),\cdot,\widehat{\triangleright})$ \cite{LST-JNCG}. Moreover, the graded dual Hopf algebra of the sub-adjacent Hopf algebra $(U(\mathcal{V}),*)$ has been used in regularity structures \cite{BHZ,Hairer}. 
\end{rmk}

Now, we want to see the triple $( \mathcal{V}, [\cdot,\cdot]_{1}, \widehat{\triangleright})$ as a post-Lie deformation of a pre-Lie algebra $( \mathcal{V},\triangleright)$ where the  product $ \triangleright $ is given by
\begin{equation} \label{axiom_post}
	X_i \, \triangleright  \,  I_{a}(\tau) =  I_{a}(\uparrow^i \tau), \quad I_{a}(\tau) \,  \triangleright  \, X_{i} = 0, \quad  X_i \, \triangleright \, X_{j} = 0,
\end{equation}
and
\begin{equation*}
	I_{a}(\sigma) \, \triangleright \, I_{b}(\tau) = I_{a}(\sigma) \, \curvearrowright \,I_{b}(\tau).
\end{equation*}

\begin{pro}\label{derivation-pre-lie}
With the above notations, $( \mathcal{V},\triangleright)$ is a pre-Lie algebra.
\end{pro}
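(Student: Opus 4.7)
The plan is to verify the pre-Lie identity
\[
(x \triangleright y) \triangleright z - x \triangleright (y \triangleright z) = (y \triangleright x) \triangleright z - y \triangleright (x \triangleright z)
\]
on basis elements by splitting into the eight cases according to whether each of $x, y, z$ is of planted-tree type $I_a(\tau)$ or monomial type $X_i$. By linearity this suffices.

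The case $(I, I, I)$ is exactly the pre-Lie identity for the single grafting product $\widehat{\curvearrowright}$ (used here in its undeformed version $\curvearrowright$ on planted trees $I_{a}(\sigma) \curvearrowright I_{b}(\tau) = I_{b}(\sigma \curvearrowright^{a} \tau)$); this follows directly from the multi-pre-Lie identity for $(\curvearrowright^{a})_{a\in\mathbb{N}^{d+1}}$ recalled in the excerpt, and is the content of the assembly result of \cite{F2018, BM22}. All cases in which $z = X_{i}$ collapse trivially: by the defining rules $I_{a}(\tau) \triangleright X_{j} = 0$ and $X_{i} \triangleright X_{j} = 0$, every term on each side of the pre-Lie identity vanishes, so the identity reads $0 = 0$.

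The genuinely substantive cases are $(I, X, I)$ and $(X, I, I)$. For instance, with $x = I_{a}(\sigma)$, $y = X_{i}$, $z = I_{b}(\tau)$ one computes
\[
(x \triangleright y) \triangleright z - x \triangleright (y \triangleright z) = 0 - I_{b}(\sigma \curvearrowright^{a} \uparrow^{i} \tau),
\]
\[
(y \triangleright x) \triangleright z - y \triangleright (x \triangleright z) = I_{b}((\uparrow^{i} \sigma) \curvearrowright^{a} \tau) - I_{b}(\uparrow^{i}(\sigma \curvearrowright^{a} \tau)),
\]
and the two expressions are equal exactly by the derivation property of $\uparrow^{i}$ with respect to $\curvearrowright^{a}$ recalled in the excerpt, namely $\uparrow^{i}(\sigma \curvearrowright^{a} \tau) = (\uparrow^{i}\sigma) \curvearrowright^{a} \tau + \sigma \curvearrowright^{a} (\uparrow^{i}\tau)$. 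The case $(X, I, I)$ is handled by the symmetric computation using the same derivation identity.

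Finally, the case $(X, X, I)$ with $x = X_{i}$, $y = X_{j}$, $z = I_{a}(\tau)$ reduces the pre-Lie identity to $I_{a}(\uparrow^{i} \uparrow^{j} \tau) = I_{a}(\uparrow^{j} \uparrow^{i} \tau)$, which holds because the operators $\uparrow^{i}$ and $\uparrow^{j}$ act independently on node decorations and therefore commute. The case $(X, X, X)$ is trivial. There is no real technical obstacle here; the main work is the case-by-case bookkeeping, and each case is settled by one of three ingredients: the pre-Lie property of $\curvearrowright$, the derivation property of $\uparrow^{i}$ with respect to $\curvearrowright^{a}$, or the commutativity of the $\uparrow^{i}$'s.
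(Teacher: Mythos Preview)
Your proof is correct and follows essentially the same approach as the paper: a case-by-case verification on basis elements, using the pre-Lie property of $\curvearrowright$ for the $(I,I,I)$ case, the trivial vanishing when $z=X_i$, the derivation identity for $\uparrow^{i}$ in the mixed $(I,X,I)$/$(X,I,I)$ cases, and the commutativity $\uparrow^{i}\uparrow^{j}=\uparrow^{j}\uparrow^{i}$ for the $(X,X,I)$ case. The paper's proof is organized identically, only with the roles of the first two arguments swapped in the displayed mixed case.
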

\begin{proof} One has to check
	\begin{equation*}
		\left( \tau_1  \triangleright \tau_2 \right)  \triangleright \tau_3 - 	 \tau_1  \triangleright (  \tau_2  \triangleright\tau_3 ) = 	( \tau_2  \triangleright \tau_1 )  \triangleright \tau_3 - 	 \tau_2  \triangleright (  \tau_1  \triangleright \tau_3 )
	\end{equation*}
	where the $ \tau_i $ are of the form $ X_i  $ and $ I_a(\sigma) $. We have to look at the different cases.

		(i) If the $ \tau_i $ are all planted decorated trees then the identity follows from the fact that the planted decorated equipped with the grafting operator $  \curvearrowright$ is a pre-Lie algebra. 

		(ii) If $\tau_3$ is of the form $X_i$, the identity is trivially equal to zero.

		(iii) It remains two cases to check for the form of $ (\tau_1,\tau_2,\tau_3):$
		\begin{equation*}
			(X_i,X_j,I_a(\sigma)), \quad (X_i,I_b(\tau),I_a(\sigma)).
		\end{equation*}
		We omit $ (I_b(\tau),X_i,I_a(\sigma)) $ as it gives the same identity as $ (X_i,I_b(\tau),I_a(\sigma)) $.
		Let us start with $ (X_i,X_j,I_a(\sigma)) $, one has
		\begin{equation*}
			\begin{aligned}
			\left( \tau_1  \triangleright \tau_2 \right)  \triangleright \tau_3 - 	 \tau_1  \triangleright (  \tau_2  \triangleright \tau_3 ) & = \left( X_i  \triangleright X_j \right)  \triangleright  I_a(\sigma) - 	 X_i  \triangleright (  X_j  \triangleright I_a(\sigma) )
			\\
			& =  - 	 X_i  \triangleright (  X_j  \triangleright I_a(\sigma) )
			\\ & = -   I_a(\uparrow^i \uparrow^j\sigma).
			\end{aligned}
		\end{equation*}
		On the other hand, one has
		\begin{equation*}
			\begin{aligned}
			( \tau_2  \triangleright \tau_1 )  \triangleright \tau_3 - 	 \tau_2  \triangleright (  \tau_1  \triangleright \tau_3 ) & =
			( X_j  \triangleright X_i )  \triangleright  I_a(\sigma) - 	 X_j \triangleright (  X_i  \triangleright I_a(\sigma)  )
			\\ & = - 	 X_j \triangleright (  X_i  \triangleright I_a(\sigma)  )
			\\&  =  -  I_a(\uparrow^j \uparrow^i	 \sigma).
			\end{aligned}
		\end{equation*}
		We conclude from the fact that $  \uparrow^i \uparrow^j =   \uparrow^j \uparrow^i$.
		We continue with the case $ (X_i,I_b(\tau),I_a(\sigma))$.
		One has
		\begin{equation*}
			\begin{aligned}
			\left( \tau_1  \triangleright \tau_2 \right)  \triangleright \tau_3 - 	 \tau_1  \triangleright (  \tau_2  \triangleright \tau_3 ) & = \left( X_i  \triangleright I_b(\tau) \right)  \triangleright  I_a(\sigma) - 	 X_i  \triangleright ( I_b(\tau) \triangleright I_a(\sigma) )
			\\
			& =  - 	 I_b(\tau) \triangleright (  X_i  \triangleright I_a(\sigma) ),
			\end{aligned}
		\end{equation*}
		where the derivation identity \eqref{derivation_i} for $ \uparrow^i $ is used. On the other hand, one has
		\begin{equation*}
			\begin{aligned}
			( \tau_2  \triangleright \tau_1 )  \triangleright \tau_3 - 	 \tau_2  \triangleright (  \tau_1  \triangleright \tau_3 ) & =
			(  I_b(\tau)   \triangleright X_i )  \triangleright  I_a(\sigma) - 	  I_b(\tau)  \triangleright (  X_i  \triangleright I_a(\sigma)  )
			\\ & = - I_b(\tau)  \triangleright (  X_i  \triangleright I_a(\sigma)  ),
			\end{aligned}	
		\end{equation*}
		which allows to conclude. The proof is finished.
\end{proof}

\begin{rmk}
	The previous pre-Lie structure could be seen as a way to define a new pre-Lie algebra from a pre-Lie product and a derivation.
	Indeed, we suppose given a pre-Lie algebra $(\mathcal{V}, \curvearrowright  )$ and  commutative derivations $ \uparrow^i $ indexed by $\left\lbrace 0,...,d \right\rbrace $ for this pre-Lie algebra. One has for every $ x,y \in \mathcal{V} $
	\begin{equation*}
		\uparrow^i \left( x \curvearrowright y  \right) = \left(  \uparrow^i  x \right) \curvearrowright y  + x \curvearrowright \uparrow^i \left( y  \right).
	\end{equation*}
	Then, we extend the space $ \mathcal{V} $ into $ \tilde{\mathcal{V}} $ by adding new elements $ X_i $. We can then equip   $ \tilde{\mathcal{V}} $ with a pre-Lie product $ \tilde{\curvearrowright} $ viewed as an extension of  $ \curvearrowright $ to $ \tilde{\mathcal{V}} $:
	\begin{equation*}
		\begin{aligned}
		& 	x \, \tilde{\curvearrowright} \, y  = x \curvearrowright y, \quad X_i \, \tilde{\curvearrowright} \, x = \uparrow^i x,
		\\ & X_i \, \tilde{\curvearrowright} \, X_j = 0, \quad  x \, \tilde{\curvearrowright} \, X_i = 0.
		\end{aligned}
	\end{equation*}
	for every $ x,y \in \tilde{\mathcal{V}} $. The proof follows exactly the same arguments as above which are the fact that $ \curvearrowright $ is a pre-Lie product and the $ \uparrow^i $ are commutative derivations for this product.
\end{rmk}

\emptycomment{
\tr{This observation is very interesting.  It is a canonical way to construct pre-Lie algebras from  commutative derivations. This construction is called derivation extensions of pre-Lie algebras.}
{\color{red} Do you have a reference?}
\TR{Let $(\g,\rhd)$ be a pre-Lie algebra and $\{d_i\}_{i=1}^{n}$ be commutative derivations of the pre-Lie algebra $(\g,\rhd)$. We consider the vector space $\h=:\mathbb{R}d_1\oplus\ldots\oplus \mathbb{R}d_n$ which is generated by $\{d_i\}_{i=1}^{n}$. Moreover, we regard the vector space $\h$ as the trivial pre-Lie algebra. Since $\{d_i\}_{i=1}^{n}$ are commutative derivations of the pre-Lie algebra $\g$,
there is a bimodule structure of the trivial pre-Lie algebra $\h$ on the vector space $\g$, which is defined by
\begin{eqnarray*}
d_i\cdot x=d_i(x),\,\,x\cdot d_i=0,\,\,\forall x\in\g,~i=1,\cdots,n.
\end{eqnarray*}
On the other hand, the bimodule structure of the pre-Lie algebra $(\g,\rhd)$ on the vector space $\h$ is trivial, which means that
$x\cdot h=0,\,\,h\cdot x=0,~x\in\g,~h\in\h.$ By a simple computation, we obtain that $(\g,\h)$ is a matched pair of pre-Lie algebras \cite[Theorem 3.5]{Bai-1}.
So there is a pre-Lie algebra structure on the vector space $\g\oplus\h$, which is given by
\begin{eqnarray*}
&&x\rhd y:=x\rhd y,~h\rhd x:=h(x),\\
&&x\rhd h:=0,\,\,\,\,\,\,\,\,\,\,\,~h\rhd l:=0.
\end{eqnarray*}
Here $x,y$ are elements of $\g$ and $h,l$ are elements of $\h$.
}
}

\begin{thm}\label{example-def}
The post-Lie algebra $(\mathcal{V}, [\cdot,\cdot]_{1}, \widehat{\triangleright})$ in Theorem  \ref{pre-lie-def-to-post} is a post-Lie deformation of the pre-Lie algebra $(\mathcal{V},[\cdot,\cdot]_0=0,\triangleright)$ in Proposition \ref{derivation-pre-lie}.
\end{thm}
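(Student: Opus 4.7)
The plan is to apply Theorem \ref{formula-post-deform} directly. Set $\pi := [\cdot,\cdot]_1$ and define $\omega : \mathcal{V} \otimes \mathcal{V} \to \mathcal{V}$ by $\omega := \widehat{\triangleright} - \triangleright$, so that tautologically $\widehat{\triangleright} = \triangleright + \omega$. One should first check that $\omega$ is well-defined and identify it explicitly. On all pairs involving at least one $X_i$ the two products $\widehat{\triangleright}$ and $\triangleright$ are given by the same formulas (using $\uparrow^i$ or producing $0$), so $\omega$ vanishes on such pairs. On a pair of planted trees, using the expansion
\[
\sigma \widehat{\curvearrowright}^a \tau = \sigma \curvearrowright^a \tau + \sum_{v \in N_\tau}\sum_{\ell \in \mathbb{N}_0^{d+1}} \binom{\Labn_v}{\ell}\, \sigma \curvearrowright_v^{a-\ell}(\uparrow_v^{-\ell} \tau),
\]
$\omega$ is precisely the ``lower grading'' Taylor correction, i.e.
\[
\omega(I_a(\sigma), I_b(\tau)) = I_b\!\left(\sum_{v,\,\ell\neq 0} \binom{\Labn_v}{\ell}\, \sigma \curvearrowright_v^{a-\ell}(\uparrow_v^{-\ell} \tau)\right).
\]

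With $\pi$ and $\omega$ so defined, Theorem \ref{pre-lie-def-to-post} (due to \cite{BK}) asserts that the triple $(\mathcal{V}, \pi, \triangleright + \omega) = (\mathcal{V}, [\cdot,\cdot]_1, \widehat{\triangleright})$ satisfies the post-Lie axioms \eqref{Post-1}--\eqref{Post-2}. By Proposition \ref{derivation-pre-lie}, $(\mathcal{V}, \triangleright)$ is a pre-Lie algebra. Applying Theorem \ref{formula-post-deform} in the converse direction, the fact that $(\mathcal{V}, \pi, \triangleright + \omega)$ is a post-Lie algebra is equivalent to the three compatibility conditions (i)--(iii) being satisfied for $(\pi, \omega)$ with respect to $\triangleright$. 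This is exactly the definition of $(\mathcal{V}, \pi, \triangleright + \omega)$ being a post-Lie deformation of $(\mathcal{V}, \triangleright)$, completing the proof.

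There is essentially no genuine obstacle here: once the decomposition $\widehat{\triangleright} = \triangleright + \omega$ is made explicit, the theorem becomes a direct repackaging of Theorem \ref{pre-lie-def-to-post} (giving the post-Lie structure) and Proposition \ref{derivation-pre-lie} (giving the underlying pre-Lie structure) in the deformation-theoretic language of Section \ref{sec:con}. The only step demanding a small verification is checking that $\omega$ really vanishes on pairs involving some $X_i$, which is immediate from comparing the defining formulas of $\triangleright$ and $\widehat{\triangleright}$ on such inputs.
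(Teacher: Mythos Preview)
Your proof is correct and follows essentially the same route as the paper: both define $\pi=[\cdot,\cdot]_1$ and $\omega=\widehat{\triangleright}-\triangleright$, check explicitly that $\omega$ vanishes on pairs involving an $X_i$ and equals the Taylor correction on pairs of planted trees, and then invoke Proposition~\ref{derivation-pre-lie} and Theorem~\ref{pre-lie-def-to-post} together with Theorem~\ref{formula-post-deform} to conclude. The only cosmetic difference is that the paper phrases the conclusion directly via the definition at the end of Theorem~\ref{formula-post-deform}, whereas you pass through the equivalent conditions (i)--(iii).
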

\begin{proof}
One can analyse the post-Lie algebraic structure $(\mathcal{V}, [\cdot,\cdot]_{1}, \widehat{\triangleright})$ in the following way:
\begin{equation*}
	\begin{aligned}
		[I_a(\tau),X_i]_{1} & = [I_a(\tau),X_i]_0 +  \hbox{lower grading terms},\\
	I_{a}(\sigma) \, \widehat{\triangleright}  \, I_{b}(\tau) & = I_{a}(\sigma) \, \triangleright \,I_{b}(\tau)  +
	\hbox{lower grading terms}.
	\end{aligned}
\end{equation*}
 More precisely, we define the linear map $\pi:\wedge^2\mathcal{V}\lon \mathcal{V}$ by
 \begin{eqnarray}\label{deformation-post-1}
	\pi(u,v):=\left\{
	\begin{array}{ll}
		I_{a-e_i}(\tau), &\mbox {$u=I_a(\tau),v=X_i$,}\\
          -I_{a-e_i}(\tau), &\mbox {$u=X_i,v=I_a(\tau)$,}\\
		0, &\mbox {other case.}
	\end{array}
	\right.
\end{eqnarray}
And define the linear map $\omega:\otimes^2\mathcal{V}\lon \mathcal{V}$ by
 \begin{eqnarray}\label{deformation-post-2}
	\omega(u,v):=\left\{
	\begin{array}{ll}
		\sum_{v\in N_{\tau}}\sum_{|\ell| \neq 0}{\mathfrak{n}_v \choose \ell} I_b( \sigma  \curvearrowright_v^{a-\ell}(\uparrow_v^{-\ell} \tau)), &\mbox {$u=I_a(\sigma),v=I_b(\tau)$,}\\
		0, &\mbox {other case.}
	\end{array}
	\right.
\end{eqnarray}
By direct computation, we obtain $[u,v]_1=[u,v]_0+\pi(u,v)$ and $u\,\widehat{\triangleright}\,v=u\,\triangleright\,v+\omega(u,v)$. Therefore, the post-Lie algebra $(\mathcal{V}, [\cdot,\cdot]_{1}, \widehat{\triangleright})$ is a post-Lie deformation of the pre-Lie algebra $( \mathcal{V},[\cdot,\cdot]_0=0,\triangleright)$.
\end{proof}

\begin{rmk}\label{parameter-def}
 Let $t\in \mathbb{R}$ be a real parameter. Define the Lie bracket on $\mathcal{V}=\tilde{\mathcal{V}}\oplus\mathbb{R}X_0\oplus\ldots\oplus \mathbb{R}X_d$ as $[x, y]_{1,t} = 0$ for $x, y \in \tilde{\mathcal{V}}$, $[x, y]_{1,t} = 0$ for $x, y \in \mathbb{R}X_0\oplus\ldots\oplus \mathbb{R}X_d$ and as
	\begin{equation*}
		[I_a(\tau),X_i]_{1,t}    = [I_a(\tau),X_i]_0 + t I_{a-e_i}(\tau).
	\end{equation*}
We define a product $ \widehat{\triangleright} $ on $ \mathcal{V} $ for every $ a,b \in \mathbb{N}^{d+1}, \, i,j \in  \lbrace 0,\ldots,d\rbrace $ as:
	\begin{equation*}
		X_i \, \widehat{\triangleright_t}  \,  I_{a}(\tau) = I_{a}(  \uparrow^i \tau), \quad I_{a}(\tau) \,  \widehat{\triangleright_t}  \, X_{i} = 0, \quad  X_i \, \widehat{\triangleright_t}  \, X_{j} = 0,
	\end{equation*}
	and
	\begin{equation*}
		I_{a}(\sigma) \, \widehat{\triangleright}_t  \, I_{b}(\tau)
		= I_{a}(\sigma) \, \triangleright \, I_{b}(\tau)+	\sum_{v\in N_{\tau}}\sum_{|\ell| \neq 0}{\mathfrak{n}_v \choose \ell} t^{|\ell|}	 I_b( \sigma  \curvearrowright_v^{a-\ell}(\uparrow_v^{-\ell} \tau)).
	\end{equation*}
By a similar computation in Theorem \ref{pre-lie-def-to-post},  for any $t\in \mathbb{R}$, the triple $( \mathcal{V}, [\cdot,\cdot]_{1,t}, \widehat{\triangleright_t})$ is a post-Lie algebra. Moreover, $(\mathcal{V}, [\cdot,\cdot]_{1,t}, \widehat{\triangleright_t})$ is a post-Lie deformation of the pre-Lie algebra $( \mathcal{V},[\cdot,\cdot]_0=0,\triangleright)$.
\end{rmk}

Inspired by the above remark, we can construct a formal post-Lie deformation as following:  for $i=1,\cdots,$ we define the linear maps $\pi_i:\wedge^2\mathcal{V}\lon \mathcal{V}$,  by
$$
	\pi_1(u,v):=\pi(u,v),\,\,\pi_{2}=\cdots=\pi_{n}=\cdots=0;
$$
for $i=1,\cdots,$ we define the linear maps $\omega_i:\otimes^2\mathcal{V}\lon \mathcal{V}$ by
 \begin{eqnarray*}
	\omega_i(u,v):=\left\{
	\begin{array}{ll}
		\sum_{v\in N_{\tau}}\sum_{|\ell|=i}{\mathfrak{n}_v \choose \ell} I_b( \sigma  \curvearrowright_v^{a-\ell}(\uparrow_v^{-\ell} \tau)), &\mbox {$u=I_a(\sigma),v=I_b(\tau)$,}\\
		0, &\mbox {other case.}
	\end{array}
	\right.
\end{eqnarray*}
Now we consider the $t$ as a formal variable.  Further, we consider the formal power series
\begin{eqnarray}
\pi_t=\sum_{i=0}^{+\infty}\pi_i t^i=\pi_1 t,\,\,\,\,~~~
\omega_t=\sum_{i=0}^{+\infty}\omega_i t^i,
\end{eqnarray}
here $\pi_0=0$ and $\omega_0=\triangleright$.
\begin{thm}
With the above notations, $(\mathcal{V}[[t]],\pi_t,\omega_t)$ is an $\mathbb{R}[[t]]$-post-Lie algebra and  $(\pi_t,\omega_t)$ is a formal post-Lie deformation of the pre-Lie algebra $(\mathcal{V},[\cdot,\cdot]_0=0,\triangleright)$ in Proposition \ref{derivation-pre-lie}.
\end{thm}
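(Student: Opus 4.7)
The plan is to reduce the formal statement to Remark~\ref{parameter-def}. The key observation is that the families $\pi_t$ and $\omega_t$ constructed just before the theorem agree, coefficient by coefficient, with the one-parameter post-Lie structure $([\cdot,\cdot]_{1,t},\widehat{\triangleright}_t)$ of Remark~\ref{parameter-def} when $t$ is regarded as a formal variable rather than a real parameter.

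First, I would check the identifications case by case on the generating elements of $\mathcal{V}$. For the bracket, both $\pi_t(I_a(\tau),X_i)=t\,I_{a-e_i}(\tau)$ and $[I_a(\tau),X_i]_{1,t}=[I_a(\tau),X_i]_0+t\,I_{a-e_i}(\tau)=t\,I_{a-e_i}(\tau)$ (since $[\cdot,\cdot]_0=0$), while all other generator pairs give zero on both sides; hence $\pi_t=[\cdot,\cdot]_{1,t}$. For the binary product, $\omega_t$ reduces to $\triangleright$ on the pairs $(X_i,I_a(\tau))$, $(I_a(\tau),X_i)$ and $(X_i,X_j)$ because only $\omega_0$ contributes there, matching the corresponding defining rules for $\widehat{\triangleright}_t$ in Remark~\ref{parameter-def}. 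On the remaining pairs,
\[
\omega_t(I_a(\sigma),I_b(\tau))=I_b(\sigma\curvearrowright^a\tau)+\sum_{v\in N_\tau}\sum_{|\ell|\ge 1}t^{|\ell|}\binom{\mathfrak{n}_v}{\ell}I_b\!\left(\sigma\curvearrowright_v^{a-\ell}(\uparrow_v^{-\ell}\tau)\right),
\]
which is precisely $I_a(\sigma)\,\widehat{\triangleright}_t\,I_b(\tau)$. Observe moreover that because $\binom{\mathfrak{n}_v}{\ell}=0$ whenever $\ell\not\le\mathfrak{n}_v$ componentwise, each evaluation of $\omega_t$ on a fixed pair of basis elements is in fact a polynomial in $t$; in particular all the expressions appearing in the post-Lie axioms lie in $\mathcal{V}[t]\subset\mathcal{V}[[t]]$ and there are no convergence issues.

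Next, by Remark~\ref{parameter-def}, for every real value $t\in\mathbb{R}$ the triple $(\mathcal{V},[\cdot,\cdot]_{1,t},\widehat{\triangleright}_t)$ is a post-Lie algebra, which means that for any fixed $x,y,z\in\mathcal{V}$ the Jacobi identity for $\pi_t$ together with the compatibility axioms \eqref{Post-1}--\eqref{Post-2} hold pointwise in $t$. Evaluated on such a triple, both sides of each axiom are polynomials in $t$ with coefficients in $\mathcal{V}$, and two polynomials that agree for every real $t$ are equal as elements of $\mathcal{V}[t]$. Consequently the axioms hold as identities in $\mathcal{V}[t]$, and therefore \emph{a fortiori} in $\mathcal{V}[[t]]$, which endows $\mathcal{V}[[t]]$ with the structure of an $\mathbb{R}[[t]]$-post-Lie algebra with bracket $\pi_t$ and product $\omega_t$.

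Finally, setting $t=0$ gives $\pi_0=0$ and $\omega_0=\triangleright$, so $(\mathcal{V}[[t]],\pi_t,\omega_t)$ is, by definition, a formal post-Lie deformation of the pre-Lie algebra $(\mathcal{V},[\cdot,\cdot]_0=0,\triangleright)$ of Proposition~\ref{derivation-pre-lie}. The only non-trivial step is the coefficient-by-coefficient identification of $\omega_t$ with $\widehat{\triangleright}_t$ and the passage from a $t$-parametrised family of post-Lie algebras to a formal deformation; both are routine given Remark~\ref{parameter-def}, so I expect no substantive obstacle.
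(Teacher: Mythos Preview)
Your proposal is correct and follows essentially the same route as the paper: both reduce to Remark~\ref{parameter-def}, observe that the post-Lie axioms evaluated on fixed elements of $\mathcal{V}$ are polynomials in $t$, and then use that a polynomial vanishing for all real $t$ is identically zero (the paper phrases this by contradiction via the fundamental theorem of algebra, you state it directly). The only minor difference is that the paper dispatches the Jacobi identity \eqref{deformation4} for $\pi_t$ separately using $\pi_i=0$ for $i\neq 1$, whereas you absorb it into the general polynomial argument.
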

\begin{proof}
We   only need to prove that for any $n$ and $x,y,z\in \mathcal{V}$,   the equations \eqref{deformation4}, \eqref{deformation5} and \eqref{deformation6} hold.
Since $\pi_1$ is a Lie algebra structure on $\mathcal{V}$ and $\pi_{i}=0,i\not=1$, the equation \eqref{deformation4} holds  immediately. One can show the equations \eqref{deformation5} and \eqref{deformation6} directly by a tedious computation. Here we take a different approach.
We suppose that the equation \eqref{deformation6}  does not hold. This  means that there exist $n_0$ and $x_0,y_0,z_0\in \mathcal{V}$ such that
$$
\sum\limits_{i+j=n_0\atop i,j\geq0}\Big(\omega_i(\omega_j(x_0,y_0)-\omega_j(y_0,x_0)+\pi_j(x_0,y_0),z_0)-\omega_i(x_0,\omega_j(y_0,z_0))+\omega_i(y_0,\omega_j(x_0,z_0))\Big)\not=0.
$$
By the definition of $\omega_i$, we obtain a nonzero polynomial
\begin{eqnarray*}
&&\omega_t(\omega_t(x_0,y_0)-\omega_t(y_0,x_0)+\pi_t(x_0,y_0),z_0)-\omega_t(x_0,\omega_t(y_0,z_0))+\omega_t(y_0,\omega_t(x_0,z_0))\\
&=&\sum_{n=0}^{+\infty}\sum\limits_{i+j=n\atop i,j\geq0}\Big(\omega_i(\omega_j(x_0,y_0)-\omega_j(y_0,x_0)+\pi_j(x_0,y_0),z_0)-\omega_i(x_0,\omega_j(y_0,z_0))+\omega_i(y_0,\omega_j(x_0,z_0))\Big)t^n.
\end{eqnarray*}
According to the fundamental theorem of algebra, the above nonzero polynomial has finite many real roots. Note that  for any $t\in \mathbb{R}$,  $( \mathcal{V}, [\cdot,\cdot]_{1,t}, \widehat{\triangleright_t})$ is a post-Lie algebra, it means that the above nonzero polynomial has infinite many real roots. Therefore, our assumption gives rise to a contradiction. So the equation \eqref{deformation6} holds. By the same method, we deduce that the equation \eqref{deformation5} holds. Now the  whole process of the proof is finished.
\end{proof}

\begin{rmk}
One can observe that it is a specific case of formal post-Lie deformation of the pre-Lie algebra $(\mathcal{V},[\cdot,\cdot]_0=0,\triangleright)$ as $ \omega_n(	I_{a}(\sigma),I_{b}(\tau))$ is zero for $n$ sufficiently large. If we take $t=1$, then we obtain the previous post-Lie deformation of the pre-Lie algebra  $(\mathcal{V},[\cdot,\cdot]_0=0,\triangleright)$ given in Theorem \ref{example-def}.
\end{rmk}

\emptycomment{	
	\begin{rmk}
One can also consider a post-Lie algebra on planar decorated trees with a Lie bracket $ [\cdot, \cdot]_{2} $ defined as the same as $ [\cdot, \cdot]_{0} $ except that we have now
\begin{equation*}
	\,	[I_a(\tau), I_b(\sigma)]_{2} = I_a(\tau) I_b(\sigma) - I_b(\sigma)I_a(\tau)  \neq 0.
\end{equation*}
We consider a new product $  \triangleright_l$ defined as the same as $   \triangleright $ except for planted trees when one has to graft on the left-most location on each node.
\begin{equation*}
	I_{a}(\sigma) \, \triangleright_l  \, I_{b}(\tau) = \sum_{v \in N_{\tau}}  I_{b} ( \sigma \curvearrowright_{l,v}^{a} \tau )
\end{equation*}
where $ \curvearrowright_{l,v}^{a} $ is the left-most grafting that is the grafting via an edge decorated by $a$ at the node $v$ at the left-most location. Then, equipped with the previous bracket and product, one endows the planar decorated trees with a post-Lie structure. One can consider the example \eqref{example_1}  but with the understanding that all the decorated trees are planar. One can define the deformed left-grafting:
\begin{equation*}
		I_{a}(\sigma) \, \widehat{\triangleright}_l  \, I_{b}(\tau)
	= 	\sum_{\ell\in\mathbb{N}^{d+1}}{\mathfrak{n}_v \choose \ell}	\sum_{v\in N_{\tau}} I_b( \sigma  \curvearrowright_{l,v}^{a-\ell}(\uparrow_v^{-\ell} \tau)).
\end{equation*}
In this case, one obtains a deformation of a post-Lie structure by considering the new Lie bracket $ [\cdot,\cdot]_{3} $ which coincides with $ [\cdot,\cdot]_{2} $ except for
\begin{equation*}
	\, 	[I_a(\tau),X_i]_{3}  = I_{a-e_i}(\tau).
\end{equation*}	
This case has been studied in \cite{Rahm} extending the results from \cite{BK}.	
	\end{rmk}
\tr{Please give more details about the above remark.}
\TR{It is very interesting. This is a nice example of the post-Lie deformations of post-Lie algebras.}
}
	\vspace{2mm}
	\noindent
	{\bf Acknowledgements. }  The first author (Y. Bruned) gratefully acknowledge funding support from the European Research Council (ERC) through the ERC Starting Grant Low Regularity Dynamics via Decorated Trees (LoRDeT), grant agreement No.\ 101075208. The second and third authors (Y. Sheng and R. Tang) were partially supported by NSFC (12471060, W2412041, 12371029) and the Fundamental Research Funds for the Central Universities.


\begin{thebibliography}{a}
\bibitem{AFM} M. Al-Kaabi, K. Ebrahimi-Fard and D. Manchon, Free post-groups, post-groups from group actions, and post-Lie algebras. \emph{J. Geom. Phys.}	{\bf198} (2024),  Paper No. 105129, 15 pp.


\bibitem{Bai} C. Bai, An introduction to pre-Lie algebras. In: Algebra and Applications 1: Nonssociative Algebras and Categories, Wiley Online Library (2021), 245-273.



		\bibitem{BGST}
		C. Bai, L. Guo, Y. Sheng  and R. Tang, Post-groups, (Lie-)Butcher groups and the Yang-Baxter equation. \emph{Math. Ann.} {\bf388} (2024), 3127-3167.
					
		\bibitem{BGN}
		C. Bai, L. Guo and X. Ni, Nonabelian generalized Lax pairs, the classical Yang-Baxter equation and PostLie algebras. \emph{Comm. Math. Phys.} {\bf 297} (2010), 553-596.

   \bibitem{Bal} D. Balavoine, Deformations of algebras over a quadratic operad. Operads: Proceedings of Renaissance Conferences (Hartford, CT/Luminy, 1995), \emph{Contemp. Math.} {\bf 202} Amer. Math. Soc., Providence, RI, 1997, 207-34.
			
		\bibitem{BHZ}
		Y. Bruned, M. Hairer and L. Zambotti, Algebraic renormalisation of regularity structures. \emph{Invent. Math.} {\bf 215} (2019),  1039-1156.

        \bibitem{BCCH}
{ \rm Y. Bruned, A. Chandra, I. Chevyrev and M. Hairer}, \newblock  Renormalising SPDEs in regularity structures. \emph{J. Eur. Math. Soc. (JEMS)}, \textbf{23} (2021), 869-947.
		
		\bibitem{BK}
		Y. Bruned and F. Katsetsiadis, Post-Lie algebras in regularity structures. 	\emph{Forum Math. Sigma} {\bf11} (2023), 1-20.
		
		\bibitem{BM22}
		Y.~Bruned and D.~Manchon, \newblock Algebraic deformation for (S)PDEs. \emph{J. Math. Soc. Japan.} \textbf{75} (2023), 485-526.
		
		\bibitem{Boyom}
		M. Boyom, The cohomology of Koszul-Vinberg algebras. \emph{Pacific J. Math.} {\bf225} (2006), 119-153.

\bibitem{Bu0} D. Burde, Simple left-symmetric algebras with solvable Lie algebra. \emph{Manuscripta Math.} {\bf 95} (1998), 397-411.

\bibitem{Bu}
		D. Burde, Left-symmetric algebras, or pre-Lie algebras in geometry and physics. {\em Cent. Eur. J. Math.} {\bf 4} (2006), 323-357.
		

\bibitem{Burmester-1}
A. Burmester and U. K\"uhn, On post-Lie structures for free Lie algebras. arXiv:2504.19661.

\bibitem{Burmester-2}
A. Burmester, N. Confurius and U. K\"uhn, AGZT-Lectures on formal multiple zeta values. arXiv:2406.13630.
		
		
						
		\bibitem{CL}
		F. Chapoton and M. Livernet, Pre-Lie algebras and the rooted trees operad. {\em  Int. Math. Res. Not.}  {\bf 8} (2001),  395-408.
		
		
		
		
		\bibitem{D}
		V. Dotsenko,  Functorial PBW theorems for post-Lie algebras. \emph{Comm. Algebra} {\bf48} (2020),  2072-2080.

\bibitem{DA}
		A. Dzhumadil'daev, Cohomologies and deformations of right-symmetric algebras. \emph{J. Math. Sci. (New York)}  {\bf 93} (1999),  836-876.
		
		\bibitem{EMM}
		K. Ebrahimi-Fard, I. Mencattini and H. Z. Munthe-Kaas, Post-Lie algebras and factorization theorems. {\em J. Geom. Phys.} {\bf 119} (2017), 19-33.



	\bibitem{F2018}
	L.~Foissy, 	Algebraic structures on typed decorated rooted trees. \emph{SIGMA} \textbf{17} (2021), 1-28.
				
		
		
		\bibitem{Ge0}
		M. Gerstenhaber, The cohomology structure of an associative ring. \emph{Ann. of Math. (2)} {\bf 78} (1963), 267-288.
		
		\bibitem{Ge}
		M. Gerstenhaber, On the deformation of rings and algebras. \emph{Ann. of Math. (2)} {\bf 79} (1964), 59-103.
		
		

	\bibitem{Gubarev}
		V. Gubarev, Poincar\'e-Birkhoff-Witt theorem for pre-Lie and post-Lie algebras. \emph{J. Lie Theory} {\bf30} (2020),  223-238.		


\bibitem{GMS}
E. Grong, H. Z. Munthe-Kaas  and J. Stava,  Post-Lie algebra structure of manifolds with constant curvature and torsion. \emph{J. Lie Theory} {\bf34} (2024),  339-352.



\bibitem{Hairer}
		M. Hairer,  A theory of regularity structures. \emph{Invent. Math.} {\bf198} (2014),  269-504.

\bibitem{Ha} R. Hartshore, Deformation Theory. \emph{Graduate Texts in Math} {\bf 257}, Springer, 2010.
				
		
\bibitem{JZ}
J. Jacques and  L. Zambotti, Post-Lie algebras of derivations and regularity structures. arXiv:2306.02484.

\bibitem{KS}
K. Kodaira and D. Spencer, On deformations  of complex analytic structures I \& II. {\em Ann. Math.} {\bf 67} (1958), 328-466.

				
\bibitem{LST24} A. Lazarev, Y. Sheng and R. Tang, Homotopy theory of post-Lie algebras.  arXiv:2504.19998.

\bibitem{LST-JNCG}
Y. Li, Y. Sheng and R. Tang, Post-Hopf algebras, relative Rota-Baxter operators and solutions to the Yang-Baxter equation. \emph{J. Noncommut. Geom.} {\bf18} (2024), 605-630.
		
		
		
\bibitem{Ma} D. Manchon, A short survey on pre-Lie algebras. In: Noncommutative Geometry and Physics: Renormalisation, Motives, Index Theory (2011), 89-102.

\bibitem{Maz} B. Mazur, Perturbations, deformations, and variations (and ``near-misses") in geometry, physics, and number theory. \emph{Bull AMS} {\bf 41} (2004), 307-336.
		
	\bibitem{MQS} I. Mencattini, A. Quesney and P. Silva,  Post-symmetric braces and integration of post-Lie algebras. \emph{J. Algebra} {\bf556} (2020), 547-580.
		


	
		\bibitem{Munthe-Kaas-Lundervold}
		H. Munthe-Kaas and A. Lundervold, On post-Lie algebras, Lie-Butcher series and moving frames. \emph{Found. Comput. Math.} {\bf 13} (2013), 583-613.


        \bibitem{NR} A. Nijenhuis  and R. Richardson,  Cohomology and deformations in graded Lie algebras. {\em Bull. Amer. Math. Soc.} {\bf 72} (1966) 1-29.

		
		

		\bibitem{Sm22b} A. Smoktunowicz,  On the passage from finite braces to pre-Lie rings. {\em Adv. Math.} {\bf409} (2022), Paper No. 108683, 33 pp.
			
		\bibitem{Val} B. Vallette, Homology of generalized partition posets. {\em J. Pure Appl. Algebra} {\bf 208} (2007), 699-725.
		
	\end{thebibliography}
\end{document}